\newcommand\NN{\mathbb{N}}
\newcommand\ZZ{\mathbb{Z}}
\newcommand\QQ{\mathbb{Q}}
\newcommand\CC{\mathbb{C}}
\newcommand\PP{\mathbb{P}}
\newcommand\C{\mathcal{C}}
\newcommand\OO{\mathcal{O}}
\newcommand\X{\mathfrak{X}}
\newcommand\HH{\mathcal{H}}
\newcommand\GG{\mathcal{G}}
\newcommand\Q{\mathcal{Q}}
\newcommand\g{\mathfrak{g}}
\newcommand\ov{\overline}
\newcommand\wt{\widetilde}
\newcommand\FF{\mathcal{F}}
\newcommand\KK{\mathcal{K}}
\newcommand\LL{\mathcal{L}}
\newcommand\TT{\mathcal{T}}
\newcommand\UU{\mathcal{U}}
\newcommand\VV{\mathcal{V}}
\newcommand\WW{\mathcal{W}}
\newcommand\XX{\mathfrak{X}}
\newcommand\Sl{\mathfrak{sl}}
\newcommand\aff{\mathfrak{aff}}
\newcommand\aut{\mathfrak{aut}}
\newcommand\so{\mathfrak{so}}
\newcommand\INV{\mathfrak{Inv}}
\newcommand\SUB{\mathfrak{Sub}}
\newcommand\inv{\text{Inv}}
\newcommand\Quot{\text{Quot}}
\newcommand\Aut{\text{Aut}}
\newcommand\Hom{\text{Hom}}
\DeclareMathOperator{\codim} {codim}
\DeclareMathOperator{\im} {Im}
\newcounter{Theorem}
\newtheorem{teo}[Theorem]{Theorem} 
\newtheorem{prop}[Theorem]{Proposition}
\newtheorem{lema}[Theorem]{Lemma}
\newtheorem{corollary}[Theorem]{Corollary}
\newtheorem{nteo}{Theorem}
\newtheorem{ncorollary}[nteo]{Corollary}
\theoremstyle{definition}
\newtheorem*{nobs}{Remark}
\newtheorem*{ndeff}{Definition}
\theoremstyle{definition}
\newtheorem{deff}[Theorem]{Definition}
\newtheorem{obs}[Theorem]{Remark}
\newtheorem{ej}[Theorem]{Example}
\newenvironment{dem}
	{\noindent\textit{Proof.}}{\qed \vspace{.5cm}}
\definecolor{orange}{RGB}{255,127,0}
\title{Moduli spaces of Lie algebras and foliations}
\author{Sebastián Velazquez\footnote{King's College, London, United Kingdom. The author was also supported by CONICET-Argentina, CNPq-Brazil and EPSRC-United Kingdom.\\ E-mail adress: sebastian.velazquez@kcl.ac.uk}}
\date{}
\begin{document}
\maketitle

\begin{abstract}
Let $X$ be a smooth projective variety over the complex numbers and $S(d)$ the scheme parametrizing $d$-dimensional Lie subalgebras of $H^0(X,\TT X)$. This article is dedicated to the study of the geometry of the moduli space $\inv$ of involutive distributions on $X$ around the points $\FF\in \inv$ which are induced by Lie group actions. For every $\g \in S(d)$ one can consider the corresponding element $\FF(\g)\in \inv$, whose generic leaf coincides with an orbit of the action of $\exp(\g)$ on $X$. We show that under mild hypotheses, after taking a stratification $\coprod_i S(d)_i\to S(d)$ this assignment yields an isomorphism $\phi:\coprod_i S(d)_i\to \inv$ locally around $\g$ and $\FF(\g)$. This gives a common explanation for many results appearing independently in the literature. We also construct new stable families of foliations which are induced by Lie group actions.
\end{abstract}

\section{Introduction}
Let $X$ be a smooth projective variety of dimension $n$. A singular holomorphic foliation $\FF$ on $X$ is said to be an \emph{$L$-foliation} if there exists a complex Lie group $G$ acting on $X$ with orbits of constant dimension in codimension 1 whose generic orbit coincides with the generic leaf of $\FF$. Observe that such a foliation is uniquely determined by the finite-dimensional Lie algebra $H^0(X,\TT\FF)$ of global vector fields tangent to it. These foliations have played a central role in the study of singular foliations and their moduli spaces. For instance, in the literature there are many results regarding the  \emph{stability} of different families of $L$-foliations, i.e., showing irreducible components of moduli spaces of foliations whose generic point is an $L$-foliation, see for example \cite{notej}, \cite{CLN}, \cite{CP} and \cite{fano3folds} among others. In this work we present a common framework for these phenomena and generalize many of them to broader classes of varieties and group actions. We will do so by means of comparing the moduli problem associated to the  subalgebra $H^0(X,\TT\FF)\subseteq H^0(X,\TT X)$ and the one corresponding to $\TT\FF\subseteq \TT X$ as an involutive coherent subsheaf. 

We will now give an overview of our approach. Let us use the notation $L=H^0(X,\TT X)$ for the algebra of global vector fields on $X$.
First, observe that every $L$-foliation $\FF$ is induced by the action of the connected Lie group $\exp(\g)\subseteq \Aut(X)$ for $\g=H^0(X,\TT\FF)$. Conversely, for an arbitrary Lie subalgebra $\g\subseteq H^0(X,\TT X)$ 
we will denote by $\FF(\g)$ the exact sequence
$$\hspace{0.1cm}0\rightarrow \TT\FF(\g) \rightarrow \TT X\rightarrow N_{\FF(\g)} \rightarrow 0, $$
where $\TT \FF(\g)$ is the (involutive) subsheaf of $\TT X$ generated by $\g$. Observe that if $\g$ acts infinitesimally on $X$ with orbits of constant dimension in codimension $1$, this is just the foliation induced by $\g$.
We will say that a subalgebra $\g\subseteq L$ is \emph{maximal} if the inclusion $\g\subseteq H^0(X,\TT\FF(\g))$ is an equality. In order to keep the notation simple, for every scheme $S$ we will denote by $\TT X_S$ the relative tangent sheaf $\TT_S(X\times S)$.

\begin{ndeff}A \emph{family of codimension $q$ foliations} on $X$ over a scheme $S$ is an exact sequence 
$$\FF: \hspace{0.1cm}0\rightarrow \TT\FF \rightarrow \TT X_S \rightarrow N_\FF \rightarrow 0 $$
such that $\TT\FF$ is closed under the Lie bracket of relative vector fields and its normal sheaf $N_\FF$ is torsion-free on the fibers and of rank $q$. The family is said to be \emph{flat} if $N_\FF$ is flat over $S$.
\end{ndeff}
Let $\inv\subseteq \Quot(\TT X)$ be the locally closed subscheme representing the functor assigning to each scheme $S$ the set of flat families of foliations on $X$ over $S$
(for more details see Definition \ref{definv}). This space was introduced in \cite{Pou} and \cite{Quall} independently. The problem of classifying foliations and their flat families corresponds to understanding the geometry of this space, which is our main object of study. Let us also denote $\overline{\inv}\subseteq \Quot(\TT X)$ the subscheme representing the functor of flat families of coherent involutive subsheaves of $\TT X$ (this is, families whose normal sheaf may not be torsion-free).

\begin{ndeff}A \emph{family of $d$-dimensional Lie subalgebras of vector fields with base $S$} is a locally free subsheaf $\GG\subseteq L\otimes_\CC \OO_S$ such that $[\GG,\GG]\subseteq \GG$, where $[\hspace{0.05cm},]$ stands for the $\OO_S$-linear extension of the Lie bracket on $L$. 
\end{ndeff}
Proposition \ref{lierep} shows that the corresponding \emph{functor of $d$-dimensional Lie subalgebras of $L$} is represented by a subvariety $S(d)$ of the Grassmannian $Gr(L,d)$. A study of some local aspects of its geometry can be found in \cite{Rich} and the references within. Of course, $S(d)$ is equipped with a universal family $\GG_d$ of $d$-dimensional subalgebras of $L$. Now let
$$\FF_d: \hspace{0.1cm}0\rightarrow \TT\FF_d \rightarrow \TT X_S \rightarrow N_{\FF_d} \rightarrow 0 $$
be the family of coherent involutive subsheaves over $S(d)$ such that the restriction $(\FF_d)_\g$ to the fiber $X_\g$ over a point $\g\in S(d)$ satisfies $(\FF_d)_\g=\FF(\g)$. Since this family may not be flat, we shall consider the flattening stratification 
$$\coprod_i S(d)_i \to S(d)$$
associated to the sheaf $N_{\FF_d}$. We thus get over each stratum $S(d)_i$ a flat family $\FF_i:=\FF\vert_{S(d)_i}$ and therefore a canonical morphism $\phi_i: S(d)_i\to \ov{\inv}$. It is worth mentioning that these schemes carry natural actions of $\Aut(X)$ and $\phi_i$ turns out to be $\Aut(X)$-equivariant. An element is said to be \emph{rigid} if its orbit contains a non-empty Zariski open. We will denote by 
$$\phi:\coprod_i S(d)_i\to \overline{\inv}$$
the morphism induced by the  $\phi_i$'s. In these terms, the stability of $L$-foliations corresponds to the local surjectivity of $\phi$. Our main result states that under mild hypotheses this is actually a local isomorphism. Observe that in order to understand the geometry of the moduli space around $\FF(\g)$ one can always replace $\g$ by the maximal subalgebra $H^0(X,\TT\FF(\g))\subseteq L$.

\begin{nteo} \label{teo1}  Let $\g\subseteq L$ be a maximal subalgebra of dimension $d$. Suppose further that $h^1(X, \TT\FF(\g))=0$. Then, there exist neighborhoods $\UU\subseteq \coprod_i S(d)_i$ and \emph{$\VV\subseteq \overline{\inv}$} of $\g$ and $\FF(\g)$ respectively such that the restriction $\phi: \UU\to \VV$ is an isomorphism. In particular, if $\g\subseteq L$ is rigid then $\FF(\g)$ is rigid in \emph{$\overline{\inv}$}.
\end{nteo}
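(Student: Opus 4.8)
The plan is to produce an explicit inverse to $\phi$ on a neighborhood of $\FF(\g)$, rather than to compare tangent and obstruction spaces directly. The natural candidate sends a family of foliations to its sheaf of relative global vector fields: given a flat family $\FF$ over a scheme $S$ with projection $\pi:X\times S\to S$, I would form the pushforward $\pi_*\TT\FF$. The hypothesis $h^1(X,\TT\FF(\g))=0$ is exactly what makes this construction behave well. By cohomology and base change (Grauert), it guarantees that on a neighborhood $\VV$ of $\FF(\g)$ the sheaf $\pi_*\TT\FF$ is locally free and commutes with base change; since $\g$ is maximal we have $h^0(X,\TT\FF(\g))=\dim\g=d$, so its rank is exactly $d$. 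Because the global sections of an involutive sheaf are closed under the Lie bracket, $\GG:=\pi_*\TT\FF\subseteq L\otimes_\CC\OO_S$ is a family of $d$-dimensional subalgebras, and one checks it lands in the appropriate stratum; this defines a morphism $\psi:\VV\to\coprod_i S(d)_i$.

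I would then verify that $\phi$ and $\psi$ are mutually inverse after shrinking. For $\psi\circ\phi=\mathrm{id}$, start from a family $\GG$ of subalgebras: there is a tautological inclusion $\GG\subseteq\pi_*\TT\FF(\GG)$, which is an equality on the central fiber by maximality of $\g$; since $h^1=0$ and upper semicontinuity force $h^0$ to stay equal to $d$ nearby (so every nearby subalgebra is again maximal), this inclusion of locally free rank-$d$ sheaves is an equality on a neighborhood. For $\phi\circ\psi=\mathrm{id}$, start from a foliation $\FF$ near $\FF(\g)$ and set $\GG=\pi_*\TT\FF$; then $\TT\FF(\GG)\subseteq\TT\FF$, and equality holds because the generic leaf of $\FF$ coincides with the generic $\exp(\GG)$-orbit. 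This last equality is an open condition that holds at $\FF(\g)$, where the foliation is induced by $\g$ by construction. Hence $\phi:\UU\to\VV$ is an isomorphism for suitable $\UU=\psi(\VV)$ and $\VV$.

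The delicate point is not the bijection on closed points but the scheme-theoretic statement: one must check that $\psi$ is a genuine morphism inducing the correct structure, and that the two maps are inverse as morphisms, matching the flattening stratification. This is precisely where $h^1(X,\TT\FF(\g))=0$ is indispensable, through the base-change and local-freeness it provides for $\pi_*\TT\FF$. Finally, the rigidity statement is a formal consequence: since the schemes $\coprod_i S(d)_i$ and $\overline{\inv}$ carry $\Aut(X)$-actions and $\phi$ is equivariant, a local isomorphism at $\g$ carries a neighborhood of the orbit $\Aut(X)\cdot\g$ isomorphically onto one of $\Aut(X)\cdot\FF(\g)$; if the former contains a nonempty Zariski open then so does the latter, so $\FF(\g)$ is rigid in $\overline{\inv}$.
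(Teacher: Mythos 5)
Your overall strategy is the same as the paper's: use $h^1(X,\TT\FF(\g))=0$ together with cohomology and base change to make $\pi_*\TT\FF$ a locally free rank-$d$ subsheaf of $L\otimes_\CC\OO_\VV$ closed under the bracket, obtain from representability of $\mathfrak{Sub}_d$ a morphism $\VV\to S(d)$, factor it through the flattening stratification via flatness of $N_\FF$, and check the two compositions are the identity by comparing pullbacks of universal families. Most of the steps are justified correctly, including the use of maximality (as an open condition) to get $\psi\circ\phi=\mathrm{id}$ and the equivariance argument for rigidity.

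The genuine gap is in your verification of $\phi\circ\psi=\mathrm{id}$. You assert that $\TT\FF(\GG)=\TT\FF$ for $\GG=\pi_*\TT\FF$ ``because the generic leaf of $\FF$ coincides with the generic $\exp(\GG)$-orbit.'' Even granting that coincidence, it only yields that the inclusion $\TT\FF(\GG)\subseteq\TT\FF$ is an equality on a dense open subset, i.e.\ that the quotient $\TT\FF/\TT\FF(\GG)$ is torsion. That is not enough to conclude that the two subsheaves define the same point of $\Quot(\TT X)$: the space $\overline{\inv}$ parametrizes involutive subsheaves whose normal sheaf is allowed to have torsion, so a subsheaf and a strictly larger subsheaf with the same saturation are distinct points, and generic agreement does not identify them. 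What you actually need is that $\TT\FF$ is globally generated on all of $X\times\VV$, so that the evaluation map $\pi^*\pi_*\TT\FF\to\TT\FF$ is surjective and its image is exactly $\TT\FF$, not merely generically equal to it. This is where the hypothesis enters a second time: by Lemma \ref{ext} the $d$ generating sections of $\TT\FF(\g)$ extend to a morphism $\OO^{\oplus d}_{X\times\VV}\to\TT\FF$, and surjectivity of this morphism is an open condition (the support of the cokernel is closed and misses the central fiber), so after shrinking $\VV$ the relative tangent sheaf is globally generated and the equality $\TT\FF(\GG)=\TT\FF$ holds as subsheaves of $\TT X_\VV$. With that substitution your argument goes through and coincides with the proof in the paper.
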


As an immediate application, we get

\begin{ncorollary} Let $\g\subseteq L$ be a maximal semisimple subalgebra with $h^1(X,\TT\FF(\g))=0$. Then $\FF(\g)$ is rigid in \emph{$\overline{\inv}$}.
\end{ncorollary}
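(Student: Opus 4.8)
The plan is to derive the Corollary as a direct consequence of Theorem \ref{teo1}. The statement asks us to show that a maximal semisimple subalgebra $\g\subseteq L$ with $h^1(X,\TT\FF(\g))=0$ yields a foliation $\FF(\g)$ that is rigid in $\overline{\inv}$. By the last sentence of Theorem \ref{teo1}, it suffices to verify that such a $\g$ is itself rigid as a point of $\coprod_i S(d)_i$, i.e.\ that its $\Aut(X)$-orbit contains a nonempty Zariski open set in the relevant stratum. Since $\g$ is assumed maximal and satisfies the vanishing hypothesis, the theorem then transports rigidity across the local isomorphism $\phi$.

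First I would reduce rigidity of $\g$ to a statement about the local structure of $S(d)$ at $\g$. The natural strategy is to show that the orbit map $\Aut(X)\to S(d)$, $g\mapsto g\cdot\g$, is submersive at the identity onto a neighborhood of $\g$, equivalently that the tangent space $T_\g S(d)$ is spanned by the infinitesimal action of $L=H^0(X,\TT X)=\mathrm{Lie}(\Aut(X))$. By the representability result (Proposition \ref{lierep}), the tangent space to $S(d)$ at $\g$ can be identified with the space of degree-one cocycles controlling first-order deformations of $\g$ as a subalgebra of $L$; concretely this is a Lie-algebra cohomology group of the form $H^1(\g, L/\g)$, where $\g$ acts on the quotient $L/\g$ via the adjoint action of $L$. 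The infinitesimal $\Aut(X)$-action on this tangent space corresponds to the coboundaries, so rigidity of $\g$ follows if every such cocycle is a coboundary, i.e.\ if this deformation cohomology vanishes.

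The key input is then Whitehead's lemma: since $\g$ is semisimple, $H^1(\g,M)=0$ for every finite-dimensional $\g$-module $M$, and in particular $H^1(\g,L/\g)=0$. This forces all first-order deformations of $\g$ inside $L$ to be tangent to the $\Aut(X)$-orbit, so the orbit is open in $S(d)$ near $\g$; combined with irreducibility of a neighborhood this gives that the orbit contains a Zariski-open, i.e.\ $\g$ is rigid in $S(d)$, and hence in the appropriate stratum $S(d)_i$. Applying Theorem \ref{teo1} then yields that $\FF(\g)$ is rigid in $\overline{\inv}$.

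The main obstacle I anticipate is the passage from rigidity of $\g$ inside $S(d)$ to rigidity inside the \emph{stratified} space $\coprod_i S(d)_i$ on which $\phi$ is defined. One must check that $\g$ lies in a stratum $S(d)_i$ which is open in $S(d)$ near $\g$ (so that openness of the orbit in $S(d)$ descends to openness within the stratum), and that the $\Aut(X)$-action preserves this stratum; both should follow from $\Aut(X)$-equivariance of the flattening construction together with the cohomological vanishing, but they deserve care. A secondary subtlety is confirming that the identification $T_\g S(d)\cong H^1(\g,L/\g)$ is exactly the one under which coboundaries correspond to the orbit directions, rather than merely up to a correction term coming from the maximality condition; this is where the hypothesis that $\g$ is maximal and $h^1(X,\TT\FF(\g))=0$ interlocks with Theorem \ref{teo1}.
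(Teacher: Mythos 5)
Your proposal is correct and follows essentially the same route as the paper: apply Whitehead's lemma to get $H^1(\g,L/\g)=0$, use the identification $T_\g S(d)\cong Z^1(\g,L/\g)$ with orbit directions $B^1(\g,L/\g)$ (Proposition \ref{tangentelie}, not Proposition \ref{lierep}) to conclude the orbit is open, and then invoke Theorem \ref{teo1}. The only slips are cosmetic: the tangent space is the cocycle space $Z^1(\g,L/\g)$ rather than the cohomology group itself, and the stratum subtlety you flag is harmless since the strata are $\Aut(X)$-invariant and locally closed, so an orbit open in $S(d)$ is open in its stratum.
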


To our knowledge, the tangent sheaf of a generic element of every known stable family of $L$-foliations is trivial, i.e., the map $\g\otimes_\CC\OO_X\to \TT\FF(\g)$ is an isomorphism. When $X\subseteq \PP^r$ is arithmetically Cohen-Macaulay, Theorem \ref{teo1} can be stated in the following way.

\begin{ncorollary} \label{cor1} Let $X\subseteq \PP^r$ be smooth arithmetically Cohen-Macaulay of dimension $n$ and $\g\subseteq L$ a Lie subalgebra such that the morphism of sheaves $\g\otimes_\CC\OO_{X}\to\TT\FF(\g)$ admits a resolution of length $n-2$  of the form 
\[0 \rightarrow \bigoplus_{i=1}^{r_{n-2}}\OO_{X}(e^{n-2}_i)\rightarrow \cdots\rightarrow  \bigoplus_{i=1}^{r_{1}}\OO_{X}(e^{1}_i) \rightarrow \g\otimes_\CC\OO_{X}\rightarrow \TT\FF(\g)\rightarrow 0. \]
Then $\phi:\coprod_i S(d)_i\to \overline{\emph{\inv}}$ is an isomorphism locally around $\g$ and $\FF(\g)$. In particular, if $\g\subseteq L$ is rigid then $\FF(\g)$ is rigid in \emph{$\overline{\inv}$}. 
\end{ncorollary}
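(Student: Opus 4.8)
The plan is to deduce Corollary \ref{cor1} from Theorem \ref{teo1} by verifying that the hypotheses of the theorem are met, namely that $\g$ is maximal and that $h^1(X,\TT\FF(\g))=0$. Since $X$ is arithmetically Cohen-Macaulay and we are given a length-$(n-3)$ locally free resolution of $\TT\FF(\g)$ by sums of twisted line bundles, the natural strategy is to compute $H^1(X,\TT\FF(\g))$ by breaking the resolution into short exact sequences and chasing cohomology through the resulting long exact sequences. The key vanishing input should come from the aCM hypothesis: for $X\subseteq\PP^r$ arithmetically Cohen-Macaulay of dimension $n$, the intermediate cohomology $H^i(X,\OO_X(e))$ vanishes for all twists $e$ and all $0<i<n$. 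This is precisely the cohomological characterization of the aCM condition, and it is what makes the line bundles $\OO_X(e^j_i)$ "cohomologically invisible" in the middle degrees.

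The main computation I would carry out splits the resolution into short exact sequences $0\to K_{j+1}\to \bigoplus_i\OO_X(e^j_i)\to K_j\to 0$, where $K_0=\TT\FF(\g)$, $K_1$ is the image of $\g\otimes_\CC\OO_X$ (equivalently $\g\otimes_\CC\OO_X$ itself, since the displayed sequence shows $\g\otimes\OO_X\to\TT\FF(\g)$ is the augmentation and the resolution is of the cokernel-free map), and so on up the resolution. Tracing $H^1$ of $\TT\FF(\g)$ through these sequences, the aCM vanishing of intermediate cohomology of the $\OO_X(e^j_i)$ lets one shift the relevant obstruction up the complex: $H^1(\TT\FF(\g))$ embeds into $H^2(K_1)$, which embeds into $H^3(K_2)$, and so on, gaining one degree at each of the $n-3$ steps. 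After $n-3$ steps one arrives at a group of the form $H^{n-2}$ or $H^{n-1}$ of the last term $\bigoplus_i\OO_X(e^{n-3}_i)$, which again vanishes by the aCM hypothesis since the relevant degree lies strictly between $0$ and $n$. The careful bookkeeping of which degree is reached after $n-3$ steps — and confirming it stays in the "middle" range $(0,n)$ where aCM vanishing applies — is the crux, and the length $n-3$ is chosen precisely so that the final degree does not spill over into $H^0$ or $H^n$.

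The maximality of $\g$, which is the second hypothesis of Theorem \ref{teo1}, I would address by noting that under the assumption that $\g\otimes_\CC\OO_X\to\TT\FF(\g)$ is (generically) the defining map with the displayed resolution, one has $H^0(X,\TT\FF(\g))=\g$: taking $H^0$ of the resolution and using that $X$ is projective (so $H^0(X,\OO_X)=\CC$ and $h^0$ of the twists is controlled) forces the global sections of $\TT\FF(\g)$ to coincide with $\g$. This is exactly the statement that the inclusion $\g\subseteq H^0(X,\TT\FF(\g))$ is an equality, i.e., that $\g$ is maximal. Once both hypotheses are verified, Theorem \ref{teo1} applies verbatim and yields that $\phi$ is a local isomorphism around $\g$ and $\FF(\g)$, together with the rigidity consequence, completing the proof.

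I expect the main obstacle to be the degree bookkeeping in the cohomology chase: one must confirm that each of the $n-3$ short exact sequences genuinely raises the cohomological index by exactly one and that the terminal degree lands strictly inside the aCM vanishing range $(0,n)$, rather than at the boundary where $H^0$ or $H^n$ could be nonzero. The hypothesis that the resolution has length exactly $n-3$ is clearly calibrated to this count, so the proof hinges on making that numerology precise; everything else reduces to invoking the aCM vanishing theorem and the previously established Theorem \ref{teo1}.
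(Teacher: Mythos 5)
Your overall strategy is exactly the paper's: the paper packages the cohomology chase into Lemma \ref{h1} (splitting the length-$(n-3)$ resolution into short exact sequences and using the aCM vanishing $H^i(X,\OO_X(e))=0$ for $0<i<n$ to push $h^1(X,\TT\FF(\g))$ up to an intermediate cohomology group of the last free term), and your description of that chase, including the calibration of the length $n-3$, is correct.

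The one step that does not work as you state it is the verification of maximality. You propose to get $H^0(X,\TT\FF(\g))=\g$ by ``taking $H^0$ of the resolution'' and appealing to $H^0(X,\OO_X)=\CC$ and control of $h^0$ of the twists; but taking global sections of the augmentation $\g\otimes_\CC\OO_X\to\TT\FF(\g)$ only gives a map $\g\to H^0(X,\TT\FF(\g))$ whose surjectivity is \emph{not} a consequence of knowing $H^0$ of the free terms --- the obstruction to surjectivity lives in $H^1(X,\KK)$, where $\KK$ is the kernel of $\g\otimes_\CC\OO_X\to\TT\FF(\g)$. The correct mechanism (this is the paper's Lemma \ref{resol}) is to observe that $\KK$ itself admits a free resolution by the remaining terms $\bigoplus_i\OO_X(e^{n-3}_i)\to\cdots\to\bigoplus_i\OO_X(e^1_i)$, of length at most $n-3$, so the \emph{same} cohomology chase you already set up gives $h^1(X,\KK)=0$; the long exact sequence of $0\to\KK\to\g\otimes_\CC\OO_X\to\TT\FF(\g)\to 0$ then yields surjectivity of $\g\to H^0(X,\TT\FF(\g))$, and injectivity is automatic because the composite with $H^0(X,\TT\FF(\g))\hookrightarrow L$ is the given inclusion $\g\hookrightarrow L$. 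With that substitution your argument closes and coincides with the paper's proof, which simply cites Lemma \ref{resol} together with Theorem \ref{isolocal}.
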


\begin{nobs} The above Corollary opens up the possibility of constructing stable families of $L$-foliations whose generic element has non-trivial tangent sheaf. We do so in Example \ref{adjsln}, where we show that the adjoint representation of the special linear algebra $\Sl_n$ induces a rigid element in $\inv$.
\end{nobs}

\begin{nobs}In the case of codimension $1$ foliations we also prove Theorem \ref{teoformas} and Corollary \ref{corformas}, which are the analogous statements corresponding to Theorem \ref{teo1} and Corollary \ref{cor1} for the classical spaces 
\[ \FF^1(X,\LL)= \{[\omega]\in \PP H^0(X,\Omega^1_X\otimes \LL) \hspace{0.1cm}\vert\hspace{0.1cm} \omega\wedge d\omega=0\} \]
parameterizing classes of integrable twisted $1$-forms with coefficients in some line bundle $\LL$ on $X$.
\end{nobs}

We also discuss the above results in terms of the complexes governating the deformation theories in play, namely the Chevalley-Eilenberg complex $C^\bullet(\g,L/\g)$ and the leaf complex $\LL_{\FF(\g)}$ of $\FF(\g)$.
\vspace{1cm}

\textbf{Acknowledgements.} The content of this article is part of the author's Ph.D thesis under the advice of Fernando Cukierman, to whom the author is very grateful for his support and guidance. Gratitude is also due to Carolina Araujo, César Massri, Jorge Vitório Pereira and Federico Quallbrunn for their useful comments and suggestions.

\section{Preliminaries}
Let us first recall some facts on (families of) coherent sheaves that will be needed in the rest of the article.

\begin{deff} Let $\FF$ be a coherent sheaf on $X$. The \emph{singular scheme} of $\FF$ is the closed subscheme 
\[ Sing(\FF)=Supp\left( \bigoplus_{i=1}^n\mathcal{E}xt^i (\FF,\OO_X)\right). \]
\end{deff}

\begin{deff} Let $f:Y\to S$ be a morphism of noetherian schemes. A \emph{family of coherent sheaves on the fibers of} $f$ is a coherent sheaf $\FF$ on $Y$. The family is \emph{flat} if $\FF$ is flat over $S$. A \emph{family of coherent sheaves on} $X$ with base $S$ is just a family of coherent sheaves on the fibers of the projection $X\times S\to S$. We will use the notation $\FF_s$ for the restriction of $\FF$ to the fiber $Y_s$. In the case $Y=X\times S$ we will just write $X_s$ instead.
\end{deff}

For the sake of clarity we will now state the so called ``Cohomology and Base Change'' Theorem.

\begin{teo}[Theorem 12.11, Chapter III, \cite{Hart}] \label{cybc} Let $f:Y\to S$ be a projective morphism of noetherian schemes and $\FF$ a flat family of coherent sheaves on its fibers. Let $s$ be a point in $S$. Then:
\begin{enumerate}
	\item if the natural map
\[ \varphi^i (s): R^if_*(\FF)\otimes k(s)\to H^i(Y_s,\FF_s) \]
is surjective, then it is an isomorphism and the same holds for every $s'$ in a sufficiently small neighborhood of $s$.
	\item if $\varphi^i(s)$ is surjective, then the following are equivalent:
		\begin{itemize}
			\item $\varphi^{i-1}(s)$ is also surjective.
			\item $R^i f_*(\FF)$ is locally free in a neighborhood of $s$.
		\end{itemize}
\end{enumerate}
\end{teo}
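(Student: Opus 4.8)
The plan is to reduce the statement, which is local on $S$, to a purely homological fact about a single bounded complex of finite free modules and its behaviour under base change. First I would replace $S$ by an affine open neighbourhood $\operatorname{Spec}(A)$ of $s$, with $A$ noetherian, and (for the fibrewise assertions) localize so that $A$ is local with residue field $k=k(s)$. The decisive input is the \emph{Grothendieck complex}: a bounded complex
$$L^\bullet:\quad 0\to L^0\to L^1\to\cdots\to L^N\to 0$$
of finitely generated free $A$-modules together with isomorphisms $H^i(Y,\FF\otimes_A M)\cong h^i(L^\bullet\otimes_A M)$, natural in the $A$-module $M$. Taking $M=A$ identifies $R^if_*(\FF)$ with the $A$-module $h^i(L^\bullet)$, while taking $M=k(s)$ gives $H^i(Y_s,\FF_s)\cong h^i(L^\bullet\otimes_A k)$; under these identifications $\varphi^i(s)$ becomes the canonical comparison map $t^i\colon h^i(L^\bullet)\otimes_A k\to h^i(L^\bullet\otimes_A k)$. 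To construct $L^\bullet$ I would start from the \v Cech complex $\check C^\bullet(\mathfrak U,\FF)$ of a finite affine cover $\mathfrak U$ of $Y$; since $\FF$ is flat over $A$, the functor $-\otimes_A M$ commutes with forming this complex and taking cohomology, so $\check C^\bullet(\mathfrak U,\FF)\otimes_A M$ computes $H^i(Y,\FF\otimes_A M)$. This is a bounded complex of flat $A$-modules with finitely generated cohomology, and a standard homological lemma replaces it, via a quasi-isomorphism compatible with $-\otimes_A M$, by the desired bounded complex $L^\bullet$ of finitely generated free modules.

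With $L^\bullet$ in hand the theorem reduces to the following \emph{exchange lemma}: for a bounded complex of finite free modules over a noetherian local ring $A$, the map $t^i$ above satisfies (a) if $t^i$ is surjective then it is bijective, and it is bijective on a neighbourhood of $s$; and (b) if $t^i$ is bijective, then $t^{i-1}$ is bijective if and only if $h^i(L^\bullet)$ is free near $s$. The point is that $T^i:=H^i(L^\bullet\otimes_A -)$ is a half-exact additive functor: because each $L^j$ is free, hence flat, every short exact sequence $0\to M'\to M\to M''\to 0$ yields a short exact sequence of complexes $0\to L^\bullet\otimes M'\to L^\bullet\otimes M\to L^\bullet\otimes M''\to 0$ and thus a long exact cohomology sequence, and $T^i(M)$ is finitely generated for finitely generated $M$.

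I would prove the exchange lemma by diagram chasing with the short exact sequences of cocycles and coboundaries,
$$0\to Z^i\to L^i\xrightarrow{\,d^i\,} B^{i+1}\to 0,\qquad 0\to B^i\to Z^i\to h^i(L^\bullet)\to 0,$$
tensoring them with $k=k(s)$ and tracking the resulting $\operatorname{Tor}$ terms. The two facts that drive the argument are that a finitely generated module over a local noetherian ring is free precisely when $\operatorname{Tor}_1^A(-,k)$ vanishes, and that the locus in $\operatorname{Spec}(A)$ where a finitely generated module is free is open. Concretely, surjectivity of $t^i$ forces the vanishing of the relevant $\operatorname{Tor}_1$ and hence freeness of $B^{i+1}$, which simultaneously upgrades $t^i$ to an isomorphism and, being an open condition, propagates to a neighbourhood; part (b) follows from the same two sequences, relating freeness of $h^i(L^\bullet)$ (i.e. vanishing of $\operatorname{Tor}_1(h^i,k)$) to the injectivity of $B^i\otimes k\to Z^i\otimes k$, which is exactly what governs $t^{i-1}$. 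Translating back through $R^if_*(\FF)\leftrightarrow h^i(L^\bullet)$ and $\varphi^i(s)\leftrightarrow t^i$ then yields both assertions of the theorem.

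The main obstacle is the construction of the Grothendieck complex in the first paragraph, namely replacing the (infinitely generated) \v Cech complex by a finite free complex in a way compatible with arbitrary base change $-\otimes_A M$; this is the genuine technical heart, and it is here that flatness of $\FF$ and noetherianity of $A$ are essential. Once that complex is available, the two parts of the statement are formal consequences of half-exactness together with the commutative-algebra facts on freeness and openness invoked above, so the remainder of the proof is routine diagram chasing.
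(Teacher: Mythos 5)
This statement is quoted verbatim from Hartshorne (Theorem 12.11, Chapter III) and the paper offers no proof of its own, so there is nothing internal to compare against; your proposal correctly reconstructs the standard proof from that very source (and Mumford's \emph{Abelian Varieties}, \S II.5): reduce to an affine noetherian base, replace the \v{C}ech complex of a finite affine cover --- using flatness of $\FF$ so that cohomology commutes with $-\otimes_A M$ --- by a bounded complex of finite free modules computing cohomology universally, and then deduce both parts by the Tor/Nakayama diagram chase on the cocycle--coboundary sequences, where surjectivity of the comparison map forces the relevant cokernel of $d^i$ to be free (an open condition), upgrading surjectivity to an isomorphism near $s$ and tying $\varphi^{i-1}$ to local freeness of $R^if_*(\FF)$. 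Your outline is sound and is essentially the canonical argument the paper's citation points to.
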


\begin{lema} \label{ext} Let $\FF$ be a flat family of coherent sheaves on $X$ with base $S$ and $s\in S$ be a point such that $h^1(X_s,\FF_s)=0$. Then for every global section $\X\in H^0(X_s,\FF_s)$ there exists a neighborhood $\UU\subseteq S$ of $s$ and an element $\X_\UU\in H^0(X\times \UU,\FF)$ whose restriction to $X_s$ coincides with $\X$.
\end{lema}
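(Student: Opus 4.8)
The plan is to reinterpret global sections of $\FF$ over products $X\times\UU$ as sections of a pushforward sheaf and then apply the Cohomology and Base Change theorem (Theorem \ref{cybc}) to the projection. Concretely, write $f:X\times S\to S$ for the projection, which is projective since $X$ is, and recall that $H^0(X\times\UU,\FF)=\Gamma(\UU,f_*\FF)$ for every open $\UU\subseteq S$. Thus it suffices to prove that, in a neighborhood of $s$, the sheaf $f_*\FF=R^0f_*(\FF)$ is locally free and the base change map $\varphi^0(s):(f_*\FF)\otimes k(s)\to H^0(X_s,\FF_s)$ is an isomorphism; the required lift will then be produced by choosing a preimage of $\X$ and extending it over a small neighborhood.

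First I would feed the vanishing $h^1(X_s,\FF_s)=0$ into Theorem \ref{cybc}. Since $H^1(X_s,\FF_s)=0$, the map $\varphi^1(s)$ is trivially surjective, so by part (1) it is an isomorphism and $R^1f_*(\FF)\otimes k(s)=0$; by Nakayama's lemma $R^1f_*(\FF)$ then vanishes in a neighborhood of $s$, and in particular is locally free there. Now part (2) applied with $i=1$ — using that $\varphi^1(s)$ is surjective and that $R^1f_*(\FF)$ is locally free near $s$ — yields that $\varphi^0(s)$ is surjective, hence an isomorphism by part (1). Applying part (2) once more with $i=0$, the map $\varphi^{-1}(s)$ being vacuously surjective, shows that $f_*\FF$ is locally free in a neighborhood $\UU$ of $s$.

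It remains to perform the lift. Since $f_*\FF$ is locally free near $s$, its stalk $(f_*\FF)_s$ is a free $\OO_{S,s}$-module, so the reduction map $(f_*\FF)_s\to(f_*\FF)\otimes k(s)$ is surjective; choosing a preimage of $(\varphi^0(s))^{-1}(\X)$ and representing it by a section over a sufficiently small $\UU$ produces $\X_\UU\in\Gamma(\UU,f_*\FF)=H^0(X\times\UU,\FF)$. The key point making the conclusion come out correctly is that the composite of evaluation at $s$ with $\varphi^0(s)$ is precisely the sheaf-theoretic restriction map $H^0(X\times\UU,\FF)\to H^0(X_s,\FF_s)$, so $\X_\UU$ restricts to $\X$ by construction.

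I expect the main obstacle to lie not in any single hard estimate but in the careful handling of the base change machinery: one must track that Theorem \ref{cybc} presupposes $\FF$ to be flat over $S$ (so that the statement is really used for flat families, or one first restricts to the flat locus), and one must correctly identify the canonical base change map with the restriction to the fibre, as this identification is exactly what guarantees that the extended section $\X_\UU$ has the prescribed restriction rather than merely agreeing with $\X$ up to the isomorphism $\varphi^0(s)$.
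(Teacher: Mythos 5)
Your proof is correct and follows essentially the same route as the paper: both feed $h^1(X_s,\FF_s)=0$ into Theorem \ref{cybc} for the projection $X\times S\to S$ to kill $R^1f_*\FF$ near $s$, deduce that $\varphi^0(s)$ is an isomorphism, and lift the section through $f_*\FF$. The only cosmetic differences are that the paper gets the vanishing of $R^1f_*\FF$ on a neighborhood via semicontinuity applied at nearby points where you use Nakayama, and you spell out (correctly) the local freeness of $f_*\FF$ and the flatness hypothesis that Theorem \ref{cybc} silently requires.
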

\begin{dem} By the semicontinuity theorem we know that $h^1(X_{s'},\FF_{s'})=0$ for every $s'$ in a neighborhood $\UU$ of $s$. Shrinking $\UU$ if necessary, by the first item in Theorem \ref{cybc} one must have $R^1 p_*(\FF)\otimes k(s') = 0$ for every $s'\in\UU$. But then $s\notin Supp(R^1p_*(\FF))$ and therefore 
\[ \varphi^0(s): p_*(\FF)\otimes k(s)\to H^0(X_s,\FF_s)\]
 is an isomorphism. In particular, every global section of $\FF_s$ extends to a section defined over some neighborhood $\UU$ of $s$.
\end{dem}

In the case $X\subseteq \PP^r$ is arithmetically Cohen-Macaulay of dimension $n$ we can replace the hypothesis on the first cohomology group of $\FF$ by a condition on the (graded) homological dimension of $\FF$. For every coherent sheaf $\FF$ on $X$ one can construct a graded free resolution 
\[ \cdots  \rightarrow \bigoplus_{i=1}^{k_{n}}\OO_{X}(-d_i^{n})\xrightarrow{d_n} \cdots\xrightarrow{d_1} \bigoplus_{i=1}^{k_0}\OO_{X}(-d_i^0)\xrightarrow{d_0} \FF \rightarrow 0. \]
We say that the resolution is of length $l$ if $k_l>0$ and $k_{l'}=0$ for $l'>l$. In the case $X=\PP^n$ for instance 
 Hilbert's Syzygy Theorem tells us that we can always construct a resolution of length $n$.

\begin{lema} \label{h1} Let $X\subseteq \PP^r$ be arithmetically Cohen-Macaulay of dimension $n$ and $\FF$ a coherent sheaf on $X$. If $\FF$ admits a free resolution of length $n-2$ then $h^1(X,\FF)=0$.
\end{lema}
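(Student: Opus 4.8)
The plan is to combine the cohomological characterization of the arithmetically Cohen--Macaulay (ACM) condition with a dimension-shifting argument along the given resolution. Recall that $X\subseteq\PP^r$ being ACM of dimension $n$ is equivalent to the vanishing of the intermediate cohomology of all twists of the structure sheaf, i.e. $H^i(X,\OO_X(k))=0$ for every $k\in\ZZ$ and every $0<i<n$. Since each term $E_j:=\bigoplus_{i=1}^{k_j}\OO_X(-d_i^j)$ of the resolution is a finite direct sum of such line bundles, this immediately gives $H^i(X,E_j)=0$ for all $j$ and all $0<i<n$. This is the \emph{only} place where the hypothesis on $X$ enters, and reducing the problem to these vanishings is the first step.

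Next I would break the resolution $0\to E_{n-3}\to\cdots\to E_0\to \FF_0\to 0$ of length $n-3$ into short exact sequences via the syzygy sheaves $M_1:=\ker(E_0\to\FF_0)$ and $M_j:=\ker(E_{j-1}\to E_{j-2})$ for $2\le j\le n-3$, obtaining
\[ 0\to M_1\to E_0\to \FF_0\to 0,\qquad 0\to M_j\to E_{j-1}\to M_{j-1}\to 0\ \ (2\le j\le n-3). \]
Because the resolution terminates at $E_{n-3}$, the final syzygy is free: $M_{n-3}\cong E_{n-3}$. Chasing the associated long exact cohomology sequences and using the ACM vanishing to kill the flanking terms coming from the $E_j$, I expect a chain of isomorphisms
\[ H^1(X,\FF_0)\cong H^2(X,M_1)\cong H^3(X,M_2)\cong\cdots\cong H^{n-2}(X,M_{n-3})=H^{n-2}(X,E_{n-3}), \]
where the isomorphism $H^{j+1}(X,M_j)\cong H^{j+2}(X,M_{j+1})$ uses precisely that $H^{j+1}(X,E_j)$ and $H^{j+2}(X,E_j)$ vanish. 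Since $M_{n-3}\cong E_{n-3}$ is a sum of line bundles and $1\le n-2<n$, the last group $H^{n-2}(X,E_{n-3})$ is again intermediate cohomology and hence vanishes, so $H^1(X,\FF_0)=0$.

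The only thing requiring genuine care --- and the main, if modest, obstacle --- is the bookkeeping of cohomological degrees: one must verify that every group appearing as a source or target in the dimension-shifting chain has degree strictly between $0$ and $n$, where the intermediate cohomology vanishes. The degrees encountered run from $1$ up to $n-2$, which is exactly why a resolution of length $n-3$ is enough: it guarantees that the top degree reached, $n-2$, still lies in the range $(0,n)$, whereas a longer resolution would eventually push us into degrees $n-1$ and $n$ where the argument could break down. I would also record the boundary case $n=3$ separately, where the resolution has length $0$ so that $\FF_0\cong E_0$ and $H^1(X,\FF_0)=H^1(X,E_0)=0$ directly; the general chase recovers this, the chain of isomorphisms being empty.
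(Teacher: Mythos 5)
Your proposal is correct and follows essentially the same argument as the paper: dimension shifting along the syzygy sheaves of the resolution, using the ACM vanishing $H^i(X,\OO_X(k))=0$ for $0<i<n$ to kill the flanking terms, ending at the free last syzygy in degree $n-2<n$. Your indexing is in fact slightly more careful than the paper's (which identifies the wrong kernel with the last free term, an off-by-one that does not affect the conclusion), so there is nothing to add.
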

\begin{dem} Recall that for an arithmetically Cohen-Macaulay $X$ the cohomology groups $H^i(X,\OO_X(d))$ vanish for every $d\in \ZZ$ and $1\leq i\leq n-1$. Now suppose a coherent sheaf $\FF$ admits a resolution of length $n-2$ as above. For every $0\leq r \leq n-3$ let us denote $ \KK^r=\ker(d_r)$. The first of these kernels fits in the exact sequence
\[ 0 \rightarrow \KK^0 \rightarrow \bigoplus_{i=1}^{k_0} \OO_{X}(-d_i^0) \xrightarrow{d_0} \FF \rightarrow 0 ,\]
which implies that $h^1(X,\FF)=h^2(X,\KK^0)$. In the same manner, for every $r$ the sequence
\[ 0 \rightarrow \KK^{r+1} \rightarrow \bigoplus_{i=1}^{k_{r+1}} \OO_{X}(-d_i^{r+1}) \xrightarrow{d_{r+1}} \KK^{r} \rightarrow 0 \]
is exact and therefore by looking at its associated sequence in cohomology we have $h^{j}(X,\KK^r)=h^{j+1}(X,\KK^{r+1})$ for every $j\leq n-2$. 
But then since $\KK^{n-3}\simeq \bigoplus_{i=1}^{k_{n-2}} \OO_{X}(-d_i^{n-2})$ we can conclude $h^1(X,\FF)=h^2(X,\KK^0)=\cdots=h^{n-1}(X,\KK^{n-3})=0.$
\end{dem}

\begin{lema} \label{resol} Let $X\subseteq \PP^r$ be a smooth arithmetically Cohen-Macaulay variety of dimension $n$, $\FF$ a coherent sheaf on $X$ and $V\otimes_\CC \OO_X\to \FF$ a surjection for some vector space $V$. If the morphism $V\otimes_\CC \OO_X\to \FF$ admits a resolution of the form 
\[0 \rightarrow \bigoplus_{i=1}^{r_{n-2}}\OO_{X}(e^{n-2}_i)\rightarrow \cdots\rightarrow  \bigoplus_{i=1}^{r_{1}}\OO_{X}(e^{1}_i) \rightarrow V\otimes_\CC\OO_{X}\rightarrow \FF\rightarrow 0, \]
then $h^1(X,\FF)=0$ and the corresponding map $V\to H^0(X,\FF)$ is surjective.
\end{lema}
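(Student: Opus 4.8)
The plan is to derive both conclusions from Lemma~\ref{h1} together with the long exact sequence in cohomology attached to the kernel of $V\otimes_\CC\OO_X\to\FF$. First I would note that $V\otimes_\CC\OO_X\cong\OO_X^{\oplus\dim V}$ is a free sheaf, so the displayed resolution is a genuine graded free resolution of $\FF$ of length $n-3$. Lemma~\ref{h1} then applies directly and gives $h^1(X,\FF)=0$, which is the first assertion.

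For the surjectivity of $V\to H^0(X,\FF)$ I would introduce $\KK:=\ker\!\left(V\otimes_\CC\OO_X\to\FF\right)$ and consider the short exact sequence
\[ 0\rightarrow \KK\rightarrow V\otimes_\CC\OO_X\rightarrow \FF\rightarrow 0. \]
The associated long exact sequence contains the segment $H^0(X,V\otimes_\CC\OO_X)\to H^0(X,\FF)\to H^1(X,\KK)\to H^1(X,V\otimes_\CC\OO_X)$. Since $X$ is connected and projective we have $H^0(X,V\otimes_\CC\OO_X)=V\otimes_\CC H^0(X,\OO_X)=V$, and the resulting map $V\to H^0(X,\FF)$ is precisely the one in the statement. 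The arithmetically Cohen--Macaulay hypothesis yields $H^1(X,V\otimes_\CC\OO_X)=V\otimes_\CC H^1(X,\OO_X)=0$, so the surjectivity we want is equivalent to the vanishing $H^1(X,\KK)=0$.

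To get $H^1(X,\KK)=0$ I would truncate the resolution: discarding the term $V\otimes_\CC\OO_X$ exhibits $\KK$ as the image of $\bigoplus_i\OO_X(e_i^1)$, hence $\KK$ admits the free resolution
\[ 0\rightarrow \bigoplus_{i=1}^{r_{n-3}}\OO_X(e_i^{n-3})\rightarrow\cdots\rightarrow \bigoplus_{i=1}^{r_1}\OO_X(e_i^1)\rightarrow \KK\rightarrow 0 \]
of length $n-4$. Running the telescoping computation from the proof of Lemma~\ref{h1} on this resolution --- which only uses the ACM vanishing $H^i(X,\OO_X(d))=0$ for $1\le i\le n-1$ and that the length is at most $n-3$ --- produces $H^1(X,\KK)=0$, and the long exact sequence above then gives the required surjectivity.

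The one point needing care, and the main (if minor) obstacle, is that Lemma~\ref{h1} is literally stated for a resolution of length exactly $n-3$ whereas $\KK$ has one of length $n-4$; one must observe that the chain $h^1=h^2(\KK^0)=\cdots$ in that proof in fact forces $H^1=0$ for every sheaf admitting an $\OO_X(d)$-resolution of length $\le n-2$, so it applies to $\KK$. A cleaner alternative that proves both statements simultaneously is the hypercohomology spectral sequence $E_1^{p,q}=H^q(X,F_{-p})\Rightarrow H^{p+q}(X,\FF)$ of the resolution $F_\bullet$, with $F_0=V\otimes_\CC\OO_X$: the ACM vanishing kills every $E_1^{p,q}$ with $p<0$ and $p+q\in\{0,1\}$ as well as $E_1^{0,1}=H^1(X,V\otimes_\CC\OO_X)$, so $H^1(X,\FF)=0$ and $H^0(X,\FF)$ is identified, via the edge map, with the quotient of $E_1^{0,0}=V$ by the image of $H^0(X,\bigoplus_i\OO_X(e_i^1))$; in particular $V\to H^0(X,\FF)$ is onto. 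The degenerate small cases (e.g. $n=3$, where the resolution reads $0\to V\otimes_\CC\OO_X\to\FF\to 0$, so $\FF$ is free and $\KK=0$) are immediate.
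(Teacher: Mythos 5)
Your proof is correct and follows essentially the same route as the paper: apply Lemma~\ref{h1} to $\FF$, truncate the resolution to obtain one for the kernel $\KK$, deduce $h^1(X,\KK)=0$, and conclude surjectivity of $V\to H^0(X,\FF)$ from the long exact sequence. Your observation that $\KK$'s resolution has length $n-4$ rather than exactly $n-3$, and that the telescoping argument of Lemma~\ref{h1} still applies, is a point the paper passes over silently, so your version is if anything slightly more careful.
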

\begin{dem} Let $V\otimes_\CC \OO_X\to \FF$ be as above and $\KK$ its kernel. Since both $\FF$ and $\KK$ admit a resolution of length $n-2$, by the previous Lemma we have $h^1(X,\FF)=h^1(X,\KK)=0$. Then looking at the cohomology of 
\[ 0 \rightarrow \KK \rightarrow  V\otimes_\CC\OO_X \rightarrow \FF\rightarrow 0\]
we can conclude that the map $H^0(X,V\otimes\OO_X)=V\to H^0(X,\FF)$ is surjective. 
\end{dem}

We end this section by stating a functorial characterization of the zero-locus of a global section $s\in H^0(X,\HH)$ for a locally free sheaf $\HH$.

\begin{deff} Let $\HH$ be a locally free sheaf on a scheme $X$ and $s:\OO_X \to \HH$ a global section. The zero-locus $Z(s)$ of $s$ is the closed subscheme whose ideal sheaf is the image of the dual map $s^\vee: \HH^\vee\to \OO_X$.
\end{deff}

\begin{lema}[Proposition 3.10, \cite{Quall}] \label{zeros} Let $\HH$ be a locally free sheaf on $X$ and $s\in H^0(X,\HH)$ a global section. Then the scheme $Z(s)$ represents the subfunctor $Z_s\hookrightarrow Hom(-,X)$ defined by
\[ Z_s(S)=\{f\in \Hom(S,X) \hspace{0.2cm}\vert\hspace{0.2cm} f^*s=0\in H^0(S,f^*\HH) \}.\]
\end{lema}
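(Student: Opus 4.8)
The plan is to verify directly that $Z(s)$ carries the universal property encoded by $Z_s$, by reducing the statement ``$Z(s)$ represents $Z_s$'' to an equivalence of two closed conditions on a test morphism. Recall that $Z(s)$ is the closed subscheme whose ideal sheaf is $I:=\im(s^\vee)\subseteq\OO_X$, and that, by the universal property of a closed subscheme, a morphism $f:S\to X$ factors through $Z(s)$ precisely when the inverse-image ideal $f^{-1}(I)\cdot\OO_S$ vanishes, equivalently when the canonical map $f^*I\to f^*\OO_X=\OO_S$ is zero. Since $\Hom(S,Z(s))$ is naturally identified with the set of those $f\in\Hom(S,X)$ factoring through $Z(s)$, and this identification is visibly natural in $S$, it suffices to prove that for every $f:S\to X$ one has $f^*s=0$ if and only if $f$ factors through $Z(s)$.

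First I would reinterpret $I$ after pullback. Applying the right-exact functor $f^*$ to the presentation $\HH^\vee\xrightarrow{s^\vee}\OO_X\to\OO_X/I\to 0$ produces an exact sequence $f^*\HH^\vee\xrightarrow{f^*s^\vee}\OO_S\to f^*(\OO_X/I)\to 0$. The same functor applied to $0\to I\to\OO_X\to\OO_X/I\to 0$ identifies $f^*(\OO_X/I)$ with $\OO_S/(f^{-1}(I)\cdot\OO_S)$, so reading off the image in the first sequence gives $\im(f^*s^\vee)=\ker\bigl(\OO_S\to f^*(\OO_X/I)\bigr)=f^{-1}(I)\cdot\OO_S$. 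Hence $f$ factors through $Z(s)$ if and only if $f^*s^\vee=0$.

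It then remains to connect $f^*s^\vee$ with $f^*s$. Because $\HH$ is locally free of finite rank, pullback commutes with dualization, so there is a canonical identification $(f^*\HH)^\vee\cong f^*(\HH^\vee)$ under which $f^*s^\vee=(f^*s)^\vee$ as a map $(f^*\HH)^\vee\to\OO_S$. For a section $t:\OO_S\to\GG$ of the locally free sheaf $\GG=f^*\HH$ one has $t=0$ if and only if $t^\vee=0$, since $(t^\vee)^\vee=t$ under $\GG^{\vee\vee}\cong\GG$. Taking $t=f^*s$ yields $f^*s=0\iff f^*s^\vee=0$, which combined with the previous paragraph gives the desired equivalence and completes the argument.

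The one genuinely delicate point is the identification of $\im(f^*s^\vee)$ with the inverse-image ideal: one must resist the temptation to write $f^*I=\im(f^*s^\vee)$, since $f^*$ is only right exact and $f^*I\to\OO_S$ need not be injective; only the \emph{images} coincide. Everything else is formal. As a sanity check, the whole argument can be run in a local trivialization $\HH\cong\OO_X^{\oplus r}$, where $s=(s_1,\dots,s_r)$, the ideal $I$ becomes $(s_1,\dots,s_r)$, and both conditions reduce transparently to $f^\# s_i=0$ for all $i$; this local description also makes the naturality and gluing immediate.
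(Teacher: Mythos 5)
Your argument is correct. Note that the paper itself supplies no proof of this lemma: it is quoted verbatim from [Proposition 3.10, \cite{Quall}], so there is nothing internal to compare against. Your direct verification is the standard one for the representability of the zero scheme of a section of a vector bundle, and it handles the two points that actually require care: the identification $\im(f^*s^\vee)=f^{-1}(I)\cdot\OO_S$ via right-exactness of $f^*$ (correctly resisting the claim $f^*I=\im(f^*s^\vee)$, since $f^*I\to\OO_S$ need not be injective), and the compatibility of pullback with dualization together with reflexivity of $f^*\HH$, which is exactly where the hypothesis that $\HH$ is locally free of finite rank enters --- the statement would fail for a general coherent $\HH$. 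The naturality in $S$ is indeed immediate from the functoriality of the factorization criterion, so the proof is complete as written.
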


\subsection{Families of Lie algebras of vector fields}

In this section we will establish the notions regarding the deformation theory of Lie algebras that will be necessary for our purposes. Let us now first briefly recall the definition of Lie algebra cohomology. For further details on this subject the reader is referred to [Section 7.7, \cite{Wei}]. 

Let $\g$ be a finite dimensional complex Lie algebra and $M$ a $\g$-module. The cohomology groups  $H^\bullet(\g,M)$ are defined as the homology of the Chevalley-Elilenberg complex 
$(C^\bullet(\g,M),\delta) $
of $M$, where $\C^\bullet(\g,M)=\Hom_\CC(\bigwedge^\bullet \g, M)$ and the differential $\delta^k: C^{k}(\g,M)\to C^{k+1}(\g,M)$ is given by the equations
\[ \delta(f)(x_1\wedge\cdots\wedge x_{k+1})=\sum_{i=1}^{k+1}(-1)^{i} x_i\cdot f(\widehat{x}_i) 
+\sum_{i<j}(-1)^{i+j}f([x_i,x_j]\wedge \widehat{x}_{ij}). \] 
We will use the notation $Z^k(\g,M)=\ker(\delta^k)$ and $B^k(\g,M)=\im(\delta^{k-1})$.

\begin{deff} Let $S$ be a $\CC$-scheme. A \emph{family of Lie algebras of dimension $r$} with base $S$ is a locally free sheaf $\OO_S$-module of rank $r$ together with a morphism  $[-,-]:\bigwedge^2\LL\to \LL$ that satisfies the Jacobi identity at the level of local sections.
\end{deff}

Now let $L=H^0(X,\TT X)$ be the complex Lie algebra of global vector fields on $X$. For a base $S$, let us consider the \emph{trivial} family over $S$ with fiber $L$, which consists of the sheaf $L_S:= L\otimes_\CC \OO_S$ together with the $\OO_S$-linear extension of the Lie bracket on $L$. 

\begin{deff} A \emph{family of subalgebras of dimension $d$ of $L$} with base $S$ is a locally free subsheaf $\GG \hookrightarrow L_S$  such that $rg(\GG)=d$ and $[\GG,\GG]\subseteq \GG$. We will use the notation $\mathfrak{Sub}_d: Sch_\CC \to Sets$ for the functor assigning to each scheme $S$ the set $\mathfrak{Sub}_d(S)$ of families of subalgebras of $L$ of dimension $d$ over $S$.
\end{deff}

\begin{ej}  \label{familia1} For every $n\geq 5$ consider the family $\GG$ of subalgebras of vector fields in $\PP^n$ of dimension $3$ with base $S=Spec(\CC[t])$ generated by the elements
\begin{align*} \X_1&= (x_1+tx_2)\frac{\partial}{\partial x_1} + x_2 \frac{\partial}{\partial x_2}+ (x_4+tx_5)\frac{\partial}{\partial x_4} + x_5 \frac{\partial}{\partial x_5}, \\
\X_2 &=-x_0\frac{\partial}{\partial x_1} -x_3\frac{\partial}{\partial x_4}\mbox{\hspace{0.2cm} and} \\
\X_3 &= -t x_0 \frac{\partial}{\partial x_1}-x_0 \frac{\partial}{\partial x_2} - t x_3 \frac{\partial}{\partial x_4}-x_3 \frac{\partial}{\partial x_5}
\end{align*}
As a family of Lie algebras, its structure is given by 
\[[\X_1,\X_2]=\X_2,  \hspace{0.3cm} [\X_1,\X_3]=t\X_2 + \X_3 .\] 
For $t\neq 0$  the Lie algebra $\GG(t)$ is isomorphic to $\GG(1)$ (via the map defined by $\X_2(t)\mapsto t\X_2(t)$).
For $t=0$ on the other hand the algebra $\GG(0)$ is not isomorphic to $\GG(1)$ (the complete classification of complex Lie algebras of dimension less or equal than $4$  can be found in \cite{class4d}). 
\end{ej}

\begin{obs} \label{brack} The functor $\mathfrak{Sub}_d$ is a subfunctor of the Grassmann functor: every family of subalgebras of dimension $d$ is in particular a family of subspaces of $L$. More precisely, let
\[0\rightarrow \KK \rightarrow L_{Gr(L,d)} \xrightarrow{q} \Q \rightarrow 0 \]
be the tautological exact sequence over the Grassmannian of $d$-dimensional subspaces of $L$ and $b: \bigwedge^2 L_{Gr(L,d)}\to L_{Gr(L,d)}$ the Lie bracket on $L_{Gr(L,d)}$. We will also denote by $b$ its restriction $b:\bigwedge^2 \KK \to L_{Gr(L,d)}$. Then for a scheme $S$ every family of subalgebras $\GG\subseteq L_S\in \mathfrak{Sub}_d(S)$ is of the form $f^*(\KK\subseteq L_{Gr(L,d)})$ for some unique $f: S\to Gr(L,d)$. Observe that the condition $[\GG,\GG]\subseteq \GG$ is equivalent to the vanishing of the map of vector bundles
$$f^*(q\circ b): \bigwedge^2 \GG \to f^*\Q\simeq L_S/\GG.$$
\end{obs}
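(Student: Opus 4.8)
The plan is to treat the two assertions of the remark in turn: first, that forgetting the bracket condition identifies a family of subalgebras with an $S$-point of $Gr(L,d)$, and second, that the bracket condition $[\GG,\GG]\subseteq\GG$ is detected by the single vanishing equation $f^*(q\circ b)=0$. Both reduce to the universal property of the Grassmannian together with the functoriality of the $\OO$-linear extension of the bracket under base change.

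For the first part I would invoke the universal property of $Gr(L,d)$. A family of subalgebras $\GG\subseteq L_S$ is in particular a rank-$d$ subbundle of the trivial bundle $L_S=L\otimes_\CC\OO_S$; here one should note that the inclusion is locally split, so that $L_S/\GG$ is again locally free and $\GG$ is a genuine subbundle rather than merely a locally free subsheaf. Since $Gr(L,d)$ represents the functor sending $S$ to the set of rank-$d$ subbundles of $L_S$, with universal subbundle $\KK\hookrightarrow L_{Gr(L,d)}$, there is a unique morphism $f:S\to Gr(L,d)$ together with an isomorphism $f^*\KK\simeq\GG$ compatible with the inclusions into $L_S=f^*L_{Gr(L,d)}$. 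Pulling back the tautological sequence, which is a short exact sequence of vector bundles and hence stays exact under $f^*$, yields $0\to\GG\to L_S\to f^*\Q\to 0$ and therefore the identification $f^*\Q\simeq L_S/\GG$.

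For the second part I would compute $f^*(q\circ b)$ and compare it with the bracket of $L_S$. The key observation is functoriality: because $b$ on $L_{Gr(L,d)}$ is the $\OO_{Gr(L,d)}$-bilinear extension of the fixed $\CC$-bilinear bracket on $L$, its pullback $f^*b$ is exactly the $\OO_S$-bilinear bracket on $L_S$, and likewise $f^*q$ is the projection $L_S\to L_S/\GG$ under the identification above. Consequently $f^*(q\circ b)$ is the composite $\bigwedge^2\GG\xrightarrow{[-,-]}L_S\to L_S/\GG$ obtained by restricting the bracket of $L_S$ to $\bigwedge^2\GG$ and projecting. Since the kernel of $L_S\to L_S/\GG$ is exactly $\GG$, this composite is the zero morphism of sheaves if and only if the image of $\bigwedge^2\GG$ under the bracket lands in $\GG$, that is, if and only if $[\GG,\GG]\subseteq\GG$. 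Together with the uniqueness of $f$ this exhibits $\SUB_d$ as the subfunctor of the Grassmann functor cut out by the equation $f^*(q\circ b)=0$.

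The diagram-chasing is routine; the one point that deserves genuine care is the functoriality claim that $f^*b$ agrees with the bracket on $L_S$. This is where it is essential that $b$ is merely the $\OO$-linear extension of a \emph{constant} $\CC$-bilinear map on $L$: this guarantees that forming $b$ commutes with base change, so that $f^*(q\circ b)$ is literally computed fibrewise by the bracket of $\GG$. The only other subtlety is the subbundle condition flagged above, which must be checked so that a family of subalgebras really lands in the Grassmann functor and not in a larger Quot-type functor.
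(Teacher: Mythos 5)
Your argument is correct and follows the same route the paper intends: the universal property of the Grassmannian produces the unique classifying map $f$, and the identification of $f^*(q\circ b)$ with the composite $\bigwedge^2\GG\to L_S\to L_S/\GG$ rests, as you rightly emphasize, on the bracket $b$ being the $\OO$-linear extension of a constant $\CC$-bilinear map and hence compatible with base change. The one caveat you flag --- that $L_S/\GG$ must be locally free for $\GG$ to define an $S$-point of the Grassmann functor rather than of a Quot-type functor --- is genuine but is best read as part of the intended definition of a family of subalgebras, since a locally free subsheaf of $L_S$ need not have locally free quotient.
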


\begin{prop} \label{lierep}The functor $\mathfrak{Sub}_d$ is represented by a closed  subscheme $S(d)\subseteq Gr(L,d)$.
\end{prop}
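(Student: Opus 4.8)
The plan is to represent $\mathfrak{Sub}_d$ as a zero locus inside the Grassmannian, using the functorial description of zero loci already established. Remark \ref{brack} has done most of the conceptual work: it exhibits $\mathfrak{Sub}_d$ as a subfunctor of the Grassmann functor $\Hom(-,Gr(L,d))$, and it identifies the subalgebra condition $[\GG,\GG]\subseteq\GG$ with the vanishing of the pullback $f^*(q\circ b)$ of a specific morphism of vector bundles on $Gr(L,d)$. So the task reduces to showing that this vanishing condition is represented by a closed subscheme.

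First I would set up the relevant global section. The morphism $q\circ b:\bigwedge^2\KK\to \Q$ is an element of $\Hom_{\OO_{Gr}}(\bigwedge^2\KK,\Q)=H^0\bigl(Gr(L,d),\,\HH\bigr)$, where $\HH:=(\bigwedge^2\KK)^\vee\otimes\Q$ is a locally free sheaf on the Grassmannian since both $\KK$ and $\Q$ are locally free. Call this global section $s\in H^0(Gr(L,d),\HH)$. For a morphism $f:S\to Gr(L,d)$ classifying a family of $d$-dimensional subspaces $\GG=f^*\KK\subseteq L_S$, the pullback $f^*s$ is exactly the morphism $f^*(q\circ b):\bigwedge^2\GG\to L_S/\GG$ appearing in Remark \ref{brack}, because pullback commutes with forming duals, tensor products, and exterior powers of locally free sheaves.

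Next I would invoke Lemma \ref{zeros} directly. That lemma says the zero-locus $Z(s)\subseteq Gr(L,d)$ is a closed subscheme representing the functor $Z_s$ whose $S$-points are the morphisms $f:S\to Gr(L,d)$ with $f^*s=0$. By the previous step, $f^*s=0$ is precisely the condition that the family $\GG=f^*\KK$ be closed under the bracket, i.e.\ that $f$ factor through $\mathfrak{Sub}_d$. Combined with the fact (Remark \ref{brack}) that every $\GG\in\mathfrak{Sub}_d(S)$ arises from a unique $f:S\to Gr(L,d)$, this yields a natural isomorphism of functors $\mathfrak{Sub}_d\simeq Z_s=\Hom(-,Z(s))$. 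Setting $S(d):=Z(s)$ then gives the desired closed subscheme of $Gr(L,d)$ representing $\mathfrak{Sub}_d$, completing the proof.

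The only genuinely delicate point, which I would state carefully rather than belabor, is the compatibility $f^*s=f^*(q\circ b)$: one must check that the global section $s$ of $\HH=(\bigwedge^2\KK)^\vee\otimes\Q$, viewed as a sheaf homomorphism $\OO_{Gr}\to\HH$, pulls back to the homomorphism whose associated map $\bigwedge^2\GG\to L_S/\GG$ is $f^*(q\circ b)$, and that the functorial vanishing in Lemma \ref{zeros} matches the vanishing as a bundle map. This is essentially the standard identification between a map of vector bundles and the corresponding section of the Hom-bundle, together with the fact that these identifications are compatible with pullback; no essential obstacle arises, and the representability then follows formally.
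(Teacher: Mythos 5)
Your proposal is correct and follows essentially the same route as the paper: both realize $q\circ b$ as a global section $s$ of the locally free sheaf $\mathcal{H}om(\bigwedge^2\KK,\Q)$ on $Gr(L,d)$, identify $\mathfrak{Sub}_d$ with the subfunctor $\{f \mid f^*s=0\}$ via Remark \ref{brack}, and conclude by Lemma \ref{zeros} that $S(d)=Z(s)$ is the representing closed subscheme. Your extra care about the compatibility of $f^*s$ with $f^*(q\circ b)$ under pullback is a reasonable elaboration of a point the paper leaves implicit, and you correctly write the Hom-bundle with source $\bigwedge^2\KK$ where the paper's statement has a small typo.
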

\begin{proof} Following Remark \ref{brack}, let us denote by $s$ the global section of the locally free sheaf $\HH om(\bigwedge^2\KK,\Q)$ corresponding to the morphism 
$$q\circ b: \bigwedge^2 \KK \to \Q.$$
 Let $S$ be a scheme. The discussion above implies that the functor $\mathfrak{Sub}_d$ can actually be described as 
\[ \mathfrak{Sub}_d(S)=\{f\in \Hom(S,Gr(L,d)) \hspace{0.2cm}\vert\hspace{0.2cm} f^*s=0\in H^0(S,f^*\HH om(\bigwedge^2\KK,\Q)) \}.\]
By Lemma \ref{zeros} this implies that $\mathfrak{Sub}_d$ is represented by the closed subscheme $Z(s)=:S(d)\subseteq Gr(L,d)$ of the section $s$.
\end{proof}

\begin{obs} \label{obslie}The automorphism group $\Aut(X)$ acts on $\TT X$ and therefore on $L$. This yields an action on $G(L,d)$,  which of course stabilizes $S(d)$. 
\end{obs}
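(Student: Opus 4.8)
The plan is to produce the action first on the vector space $L=H^0(X,\TT X)$, check that it is by Lie algebra automorphisms, and only then transport it to $Gr(L,d)$ and to the section cutting out $S(d)$. First I would recall that each $\sigma\in\Aut(X)$ induces an isomorphism of the tangent sheaf via its differential, and hence a linear automorphism $\sigma_*:L\to L$ sending a global vector field $v$ to $(\sigma^{-1})^*v$. This assignment is functorial in $\sigma$, so it defines a homomorphism $\rho:\Aut(X)\to GL(L)$; since the $\Aut(X)$-action on $X$ is algebraic and $L$ is finite-dimensional, $\rho$ is a morphism of algebraic groups. Concretely, under the identification $L=\mathrm{Lie}(\Aut(X))$ this is just the adjoint action.

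The decisive point is that $\rho$ factors through the automorphisms of $L$ as a \emph{Lie algebra}: one has $\sigma_*[v,w]=[\sigma_*v,\sigma_*w]$ for all $v,w\in L$, which is the naturality of the Lie bracket of vector fields under biholomorphisms (equivalently, a general feature of the adjoint action). Phrased as in Remark \ref{brack}, the bracket map $b:\bigwedge^2 L\to L$ is $\Aut(X)$-equivariant for the induced action on $\bigwedge^2 L$.

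It remains to descend this to $S(d)$. Any linear action on $L$ induces one on $Gr(L,d)$, so via $\rho$ we obtain the asserted $\Aut(X)$-action there. To see that $S(d)$ is stabilized \emph{as a scheme} I would avoid a pointwise argument and instead use the description from Proposition \ref{lierep} of $S(d)=Z(s)$ as the zero locus of the section $s$ determined by $q\circ b:\bigwedge^2\KK\to\Q$. The tautological sequence $0\to\KK\to L_{Gr(L,d)}\to\Q\to 0$ is $\Aut(X)$-equivariant by construction, and since $b$ is equivariant so is the composite $q\circ b$, hence so is $s$. An equivariant section of an equivariant bundle has an invariant zero scheme; reading this through the functorial characterization of $Z(s)$ in Lemma \ref{zeros} shows that $S(d)$ is preserved by the action.

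Once the bracket is known to be preserved the argument is purely formal, so the only real content is the naturality identity $\sigma_*[v,w]=[\sigma_*v,\sigma_*w]$. The single point that deserves care is upgrading set-theoretic stabilization to scheme-theoretic invariance, and this is exactly what the equivariance of the defining section $s$ delivers.
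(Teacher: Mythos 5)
Your proposal is correct; the paper states this remark without any proof (the stabilization is asserted with an ``of course''), and your argument --- that pushforward of vector fields gives a Lie algebra automorphism of $L$, so the section $s$ of $\HH om(\bigwedge^2\KK,\Q)$ cutting out $S(d)=Z(s)$ in Proposition \ref{lierep} is equivariant and its zero scheme is therefore scheme-theoretically invariant --- is precisely the justification the paper's setup (Remark \ref{brack}, Lemma \ref{zeros}) is designed to make immediate. The care you take in upgrading pointwise stabilization to scheme-theoretic invariance via the functorial description of $Z(s)$ is a genuine, if small, improvement in rigor over what the paper leaves implicit.
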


\begin{deff} A subalgebra $\g\subseteq L$ is \emph{rigid} if its orbit $\Aut(X)\cdot \g\subseteq S(d)$ contains a non-empty Zariski open subset.
\end{deff}

 The following proposition allows us to check rigidity in terms of the Chevalley Eilenberg complex of the $\g$-module $L/\g$.

\begin{prop}\label{tangentelie} Let $\g\subseteq L$ be a subalgebra of dimension $d$. Then
\begin{enumerate}
	\item the tangent space to $S(d)$ at $\g$ is isomorphic to $Z^1(\g,L/\g)$ and
	\item under the above isomorphism, the tangent space to the orbit $\emph{\Aut(X)}\cdot \g$ at $\g$ corresponds to $B^1(\g,L/\g)$.
\end{enumerate}
\end{prop}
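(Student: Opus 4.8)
The plan is to realize $S(d)$ as the zero scheme of the section $s$ of $\HH om(\bigwedge^2\KK,\Q)$ from Proposition \ref{lierep} and to identify the derivative of $s$ at $\g$ with the Chevalley--Eilenberg differential $\delta^1$. Recall that the tangent space to the Grassmannian $Gr(L,d)$ at the point $\g$ is canonically $\Hom(\g,L/\g)$, and that the fibers of $\bigwedge^2\KK$ and $\Q$ at $\g$ are $\bigwedge^2\g$ and $L/\g$. The space $L/\g$ is a $\g$-module via the descended adjoint action $x\cdot\bar v=\overline{[x,v]}$, which is well defined precisely because $\g$ is a subalgebra. Since $\g$ is a subalgebra we have $s(\g)=0$, so at $\g$ there is a well-defined intrinsic derivative
\[ d_\g s:\Hom(\g,L/\g)\longrightarrow \Hom\!\left(\textstyle\bigwedge^2\g,\,L/\g\right), \]
and by Lemma \ref{zeros} applied over the dual numbers we have $T_\g S(d)=\ker(d_\g s)$. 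Part (1) follows once we check $d_\g s=\delta^1$.

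To compute $d_\g s$ I would work over $\CC[\epsilon]/(\epsilon^2)$. A tangent vector $\phi\in\Hom(\g,L/\g)$ corresponds to the deformation $\g_\epsilon=\{\,x+\epsilon\tilde\phi(x)\ :\ x\in\g\,\}\subseteq L\otimes_\CC\CC[\epsilon]$, where $\tilde\phi$ is any lift of $\phi$ to $L$. For $x,y\in\g$ the elements $x+\epsilon\tilde\phi(x)$ and $y+\epsilon\tilde\phi(y)$ are local frames of $\KK$ over $\mathrm{Spec}\,\CC[\epsilon]$, so evaluating $s=q\circ b$ on them gives, modulo $\epsilon^2$,
\[ \big[x+\epsilon\tilde\phi(x),\,y+\epsilon\tilde\phi(y)\big]=[x,y]+\epsilon\big([x,\tilde\phi(y)]-[y,\tilde\phi(x)]\big). \]
Reducing modulo $\g_\epsilon$ and using $[x,y]\equiv-\epsilon\,\tilde\phi([x,y])\ (\mathrm{mod}\ \g_\epsilon)$, which is the contribution of the moving quotient, the $\epsilon$-coefficient becomes
\[ (d_\g s)(\phi)(x\wedge y)=x\cdot\phi(y)-y\cdot\phi(x)-\phi([x,y]), \]
which is exactly $(\delta^1\phi)(x\wedge y)$. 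Hence $T_\g S(d)=\ker(d_\g s)=Z^1(\g,L/\g)$, proving (1). The main subtlety here is bookkeeping: the terms $x\cdot\phi(y)-y\cdot\phi(x)$ come from differentiating along the moving frame of $\KK$, while the term $-\phi([x,y])$ comes from differentiating the quotient map $q$, and one must confirm that because $s(\g)=0$ this dual-number computation genuinely computes the intrinsic derivative rather than a connection-dependent one.

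For (2), I would differentiate the orbit map $\Aut(X)\to S(d)$, $g\mapsto g\cdot\g$, at the identity. Since $X$ is smooth projective, $\mathrm{Lie}(\Aut(X))=H^0(X,\TT X)=L$ and the induced action on $L$ is the adjoint action, so the orbit of $v\in L$ is the deformation $x\mapsto x+\epsilon[v,x]+O(\epsilon^2)$. Under the identification of (1) this is the tangent vector $\phi_v\in\Hom(\g,L/\g)$ given by $\phi_v(x)=\overline{[v,x]}=-(\delta^0\bar v)(x)$, where $\delta^0:L/\g=C^0(\g,L/\g)\to C^1(\g,L/\g)$ is the Chevalley--Eilenberg differential. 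As $v$ ranges over $L$ the class $\bar v$ ranges over all of $L/\g=C^0(\g,L/\g)$, so the image of the orbit-map differential is exactly $\im(\delta^0)=B^1(\g,L/\g)$, which proves (2).
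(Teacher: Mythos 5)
Your proof is correct, but it takes a genuinely different route from the paper, because the paper does not prove the proposition at all: its ``proof'' consists of citing [Proposition 6.1] and [Proposition 7.1] of \cite{Rich}. What you have written is in effect a self-contained reconstruction of that argument, adapted to the way $S(d)$ is actually built here: you realize $S(d)$ as the zero scheme $Z(s)$ of the section $s$ of $\HH om(\bigwedge^2\KK,\Q)$ from Proposition \ref{lierep}, use Lemma \ref{zeros} over the dual numbers to get $T_\g S(d)=\ker(d_\g s)$ inside $T_\g Gr(L,d)=\Hom(\g,L/\g)$, and verify by the bracket computation that $d_\g s=\delta^1$; the bookkeeping is right, in particular the term $-\phi([x,y])$ contributed by the moving quotient and the observation that $s(\g)=0$ makes the intrinsic derivative independent of any choice of trivialization. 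Differentiating the orbit map and using $\mathrm{Lie}(\Aut(X))=L$ acting adjointly then identifies the orbit's tangent space with $\im(\delta^0)=B^1(\g,L/\g)$. What your approach buys over the citation is that the statement is proved directly for the scheme $S(d)=Z(s)\subseteq Gr(L,d)$ as constructed in this paper, with its possibly non-reduced structure, so no translation to Richardson's setup is required. One small remark: your formula $(\delta^1\phi)(x\wedge y)=x\cdot\phi(y)-y\cdot\phi(x)-\phi([x,y])$ is the standard Chevalley--Eilenberg differential; the formula displayed in the paper's preliminaries has $(-1)^{i}$ where it should have $(-1)^{i+1}$ in the first sum (as printed it would not even satisfy $\delta^1\circ\delta^0=0$), so your signs are the correct ones and the discrepancy is a typo in the paper rather than an error in your computation.
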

\begin{dem} This is [Proposition 6.1, \cite{Rich}] and [Proposition 7.1, \cite{Rich}].
\end{dem}

\begin{ej} Let $\g$ be a semisimple subalgebra of $L$. By a theorem of Whitehead we know that $H^1(\g,M)=0$ for every finitely generated $\g$-module $M$. In particular, $\g$ is rigid in $L$.
\end{ej}

\begin{ej}\label{calcfamilia1} Let us consider the Lie subalgebra $\g\subseteq H^0(\PP^n,\TT \PP^n)$ of dimension $3$ corresponding to the fiber over $t=1$ in Example \ref{familia1}. Using the software GAP (see \cite{GAP}) we can easily compute the corresponding tangent spaces and obtain $\dim_\CC(Z^1(\g,L/\g))=32$ and $\dim_\CC(B^1(\g,L/\g))=28$. 
\end{ej}

\subsection{Foliations and moduli spaces}
We will now give the general definitions and properties on foliations that we will be used afterwards.

\begin{deff} A singular foliation $\FF$ of codimension $q$ on $X$ is a short exact sequence 
$$0\rightarrow \TT\FF \rightarrow \TT X \rightarrow N_\FF \rightarrow 0, $$
such that the tangent sheaf $\TT \FF$ of $\FF$ is closed under the Lie bracket of vector fields and the normal sheaf $N_\FF$ is torsion-free of rank $q$. The singular scheme $Sing(\FF)$ of the foliation is the singular scheme of its normal sheaf.
\end{deff}

\begin{deff} A family of foliations of codimension $q$ on $X$ with base $S$ is a short exact sequence 
$$0\rightarrow \TT\FF \rightarrow \TT X_S \rightarrow N_\FF \rightarrow 0 $$
such that $\TT\FF$ is closed under the Lie bracket of relative vector fields and its normal sheaf $N_\FF$ is torsion-free on the fibers and of rank $q$. Its singular scheme $Sing(\FF)$ is the singular scheme of its normal sheaf. The family is said to be \emph{flat} if $N_\FF$ is flat over $S$. 
\end{deff}

\begin{deff} For every polynomial $P\in \QQ[t]$ let $\INV^P: Sch\to Sets$ be the functor that assigns to every scheme $S$ the set of flat families $\FF$ of foliations on $X$ with base $S$ such that the restriction of $N_\FF$ to every fiber has Hilbert polynomial equal to $P$.
\end{deff}

With a similar argument to the proof of Proposition \ref{lierep} one can show the following

\begin{prop} [Proposition 6.3, \cite{Quall}] The functor $\INV^P$ is represented by a locally closed subscheme \emph{$\inv^P\subseteq \Quot^P(\TT X)$}.
\end{prop}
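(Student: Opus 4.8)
The plan is to realize $\INV^P$ as a locally closed subfunctor of the Quot functor, in close analogy with the proof of Proposition \ref{lierep}. Write $Q:=\Quot^P(\TT X)$ for the scheme carrying the universal quotient $q:\TT_Q(X\times Q)\to \N$, where $\N$ is flat over $Q$ with Hilbert polynomial $P$ on each fibre, and set $\KK:=\ker(q)$. By definition, a flat family of foliations over $S$ with the prescribed Hilbert polynomial is exactly a quotient in $\Quot^P(\TT X)(S)$ subject to two additional requirements: (i) the quotient $N_\FF$ is torsion-free on the fibres, and (ii) its kernel $\TT\FF$ is involutive. I would therefore show that (i) cuts out an open subscheme $U\subseteq Q$ and that (ii) cuts out a closed subscheme $Z\subseteq Q$, so that $\inv^P:=Z\cap U$ is locally closed and represents $\INV^P$.

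For (i), torsion-freeness of a coherent sheaf on the smooth $n$-dimensional fibre $X$ is equivalent to purity of dimension $n$, i.e.\ to Serre's condition $S_1$; and the locus in the base over which the fibres of a flat family satisfy such a purity condition is open. Hence the points of $Q$ at which $\N_s$ is torsion-free form an open subscheme $U$, over which the universal family is a genuine relative foliation precisely when $\KK$ is involutive. For (ii), I would construct the integrability tensor. The relative Lie bracket on $\TT_Q(X\times Q)$ is merely $\CC$-bilinear, but the Leibniz rule $[fA,B]=f[A,B]-(Bf)\,A$ shows that for local sections $A,B$ of $\KK$ the class $q([A,B])\in\N$ is $\OO_{X\times Q}$-bilinear and alternating, since $(Bf)\,A$ lies in $\KK$. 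This yields an $\OO_{X\times Q}$-linear map $\tau:\bigwedge^2\KK\to \N$ whose fibrewise vanishing is exactly involutivity of $\KK_s$. Because $\N$ is flat over $Q$, the sequence $0\to\KK\to\TT_Q(X\times Q)\to\N\to 0$ remains exact after any base change $g:S\to Q$, so $g^{*}\KK$ is the kernel of the pulled-back quotient and $g^{*}\tau$ is the tensor of the pulled-back family; thus $g$ factors through the involutivity locus if and only if $g^{*}\tau=0$.

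It remains to represent the subfunctor $g\mapsto (g^{*}\tau=0)$ by a closed subscheme of $Q$, and \emph{this is the main obstacle}: unlike the Grassmannian situation of Proposition \ref{lierep}, neither $\KK$ nor $\bigwedge^2\KK$ is locally free (foliations are singular), so one cannot apply Lemma \ref{zeros} to a section of a vector bundle directly. I would reduce to the locally free setting by twisting and pushing forward. Let $p:X\times Q\to Q$ be the projection and choose $m\gg 0$. For such $m$, flatness of $\N$ together with Theorem \ref{cybc} gives that $p_*\N(m)$ is locally free and commutes with base change, and both $\N(m)$ and $\bigwedge^2\KK(m)$ are relatively globally generated, i.e.\ $p^{*}p_*\bigl(\bigwedge^2\KK(m)\bigr)\to \bigwedge^2\KK(m)$ is surjective. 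Relative global generation of the source forces $\tau(m)=0$ to be equivalent to $p_*\tau(m)=0$, where $p_*\tau(m)\colon p_*\bigl(\bigwedge^2\KK(m)\bigr)\to p_*\N(m)$ is a morphism of $\OO_Q$-modules with locally free target. Choosing local generators of the coherent source then reduces the vanishing of $p_*\tau(m)$ to the simultaneous vanishing of finitely many local sections of the locally free sheaf $p_*\N(m)$, whose common zero subfunctor is represented by a closed subscheme exactly as in Lemma \ref{zeros}.

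The delicate point in the last step is functoriality: one must check that the closed subscheme so produced is independent of the auxiliary twist $m$ and compatible with arbitrary base change. I would handle this by covering $Q$ by quasi-compact opens, choosing $m$ large enough on each that the pushforwards above commute with base change, and observing that the defining condition $g^{*}\tau=0$ is intrinsic (testable fibrewise), so the locally constructed closed subschemes glue to a single closed $Z\subseteq Q$ whose functor of points is $\{g:g^{*}\tau=0\}$. Intersecting with the open locus from (i) then gives the locally closed subscheme $\inv^P=Z\cap U\subseteq \Quot^P(\TT X)$ representing $\INV^P$, completing the argument.
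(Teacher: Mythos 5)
Your argument is correct and is essentially the strategy the paper itself gestures at: the paper gives no proof, deferring to [Proposition 6.3, \cite{Quall}] with the remark that the argument parallels Proposition \ref{lierep}, and your proof is precisely that parallel carried out in detail (involutivity as the vanishing of the $\OO$-linear tensor $\bigwedge^2\KK\to\N$ reduced to Lemma \ref{zeros} by twisting and pushing forward, intersected with the open locus where the fibres of the flat quotient are torsion-free). Your identification of the genuine technical obstacle --- that $\KK$ is not locally free, so Lemma \ref{zeros} cannot be applied directly --- and your resolution of it via Serre vanishing and adjunction match the intended argument.
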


The following definition is in order to mantain a clearer exposition.

\begin{deff} \label{definv}We will use the notation 
$$\inv= \coprod_{P\in \QQ[t]} \inv^P. $$
We will also denote by $\overline{\inv}\subseteq \Quot(\TT X)$ its closure, i.e., the scheme representing the functor $\overline{\INV}$ of flat families of coherent involutive subsheaves of $\TT X$ (this is, families whose normal sheaf may not be torsion-free). Of course, these spaces are equipped with a universal family which will be denoted by $\FF_\inv$.
\end{deff}

\begin{obs} Just as in Remark \ref{obslie}, the natural action of $\Aut(X)$ on $\Quot(\TT X)$ restricts to an action on $\overline{\inv}$. We will say that $\FF$ is rigid if its orbit is Zariski open.
\end{obs}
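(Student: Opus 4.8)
The statement consists of the assertion that the natural $\Aut(X)$-action on $\Quot(\TT X)$ preserves the subscheme $\ov{\inv}$, together with a definition; only the former requires an argument, and the plan is to mirror Remark \ref{obslie}. First I would recall the action itself. An automorphism $g\in\Aut(X)$ has a differential $dg$ which, since $g$ is an isomorphism, induces a canonical isomorphism of $\OO_X$-modules $g^*\TT X\xrightarrow{\sim}\TT X$. Given a quotient $q:\TT X\to N$ representing a point of $\Quot(\TT X)$, pulling back along $g$ and composing with this isomorphism produces a new quotient $g\cdot q$; working relatively over an arbitrary base, this defines the action morphism $\Aut(X)\times\Quot(\TT X)\to\Quot(\TT X)$. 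If $\TT\FF=\ker(q)$ then $\ker(g\cdot q)$ is the transported subsheaf, which I denote $g\cdot\TT\FF$.

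The key step is that this action preserves involutivity. The differential of a biholomorphism intertwines the Lie bracket of vector fields: for local sections $v,w$ of $\TT X$ one has $dg([v,w])=[dg(v),dg(w)]$, so the induced automorphism of $\TT X$ is a Lie-algebra automorphism of the sheaf of vector fields. Consequently $[\TT\FF,\TT\FF]\subseteq\TT\FF$ forces $[g\cdot\TT\FF,g\cdot\TT\FF]\subseteq g\cdot\TT\FF$. This is precisely the mechanism by which $\Aut(X)$ stabilizes $S(d)$ in Remark \ref{obslie}: there the bracket-preservation passes to the Grassmannian of subalgebras of $L$, while here the same identity lets involutive \emph{subsheaves} of $\TT X$ transport to involutive subsheaves. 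Unlike the $S(d)$ case, where involutivity is the vanishing of an explicit section on a Grassmannian, here $\TT\FF$ is merely coherent, so I would formulate the preservation through the functor of points rather than through a section-vanishing locus.

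To upgrade this into a scheme-theoretic action I would argue as follows. Since $\ov{\inv}$ is a locally closed subscheme of $\Quot(\TT X)$ representing the functor of flat families of coherent involutive subsheaves (Definition \ref{definv}), it suffices to show that for every scheme $T$ and every $T$-point $(g,\TT\FF)$ of $\Aut(X)\times\ov{\inv}$ --- that is, a family of automorphisms over $T$ together with a flat family of involutive subsheaves $\TT\FF\subseteq\TT_T(X\times T)$ --- the transported family $g\cdot\TT\FF$ is again such an object. Flatness and torsion-freeness on the fibers are preserved because $g$ acts by an isomorphism relative to $T$, and involutivity with respect to the bracket of relative vector fields is preserved by the same intertwining identity, now applied to relative vector fields. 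Hence the composite $\Aut(X)\times\ov{\inv}\hookrightarrow\Aut(X)\times\Quot(\TT X)\to\Quot(\TT X)$ factors uniquely through $\ov{\inv}$, which is the desired restricted action, and the notion of rigidity is then well-defined. The only real subtlety, and the point I would be most careful about, is this relative step: one must check that the identity $g_*[v,w]=[g_*v,g_*w]$ holds for the bracket of $\OO_T$-relative vector fields and for a family of automorphisms over an arbitrary $T$, so that the factorization is a genuine morphism of schemes and not merely a bijection on closed points.
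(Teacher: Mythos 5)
Your argument is correct and matches the paper's (implicit) reasoning: the paper asserts this remark without proof, by direct analogy with Remark \ref{obslie}, and the mechanism you isolate --- the differential of a (relative) automorphism is a Lie-algebra automorphism of the sheaf of (relative) vector fields, so involutivity and flatness transport and the composite $\Aut(X)\times\ov{\inv}\to\Quot(\TT X)$ factors through $\ov{\inv}$ by representability --- is exactly the intended justification, including your correct observation that here one argues via the functor of points rather than via a vanishing section as for $S(d)$. The only stray point is that torsion-freeness on fibers is not part of the definition of $\ov{\inv}$ (that condition belongs to $\inv$), but checking it is harmless.
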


Let us now move on to describing the tangent space to $\inv$ at a point $\FF$.

\begin{deff} Let $\FF$ be a foliation on $X$. We say that a global vector field $\X\in H^0(X,\TT X)$ is an \emph{infinitesimal automorphism of} $\FF$ if the flow associated to $\X$ satisfies $\exp(t\X)^*\FF=\FF$. We will denote by $\aut(\FF)\subseteq H^0(X,\TT X)$ the vector space of infinitesimal automorphisms of $\FF$.
\end{deff}

\begin{deff} Let $\FF$ be a foliation on $X$. The \emph{leaf complex} of $\FF$ is the complex
\[L_\FF: \TT X\xrightarrow{d_0} Hom(\TT\FF,N_\FF)\xrightarrow{d_1} Hom(\bigwedge^2 \TT\FF,N_\FF)\xrightarrow{d_2} \cdots, \]
where the morphisms are defined at the level of local sections by
\begin{align*} d_0(v)(w)&=\overline{[v,w]} \text{ and} \\ 
 d_r(\alpha)(w_1,\dots,w_{r+1})&=  \sum_{i=1}^{r+1} (-1)^i \overline{[w_i,\alpha(\widehat{w_i})]} + \sum_{i<j}(-1)^{i+j}\alpha([w_i,w_j],\widehat{w_{ij})}.   
 \end{align*}
\end{deff} 

This object is very relevant for the theory of deformations of $\FF$. Recall that the tangent space to $\Quot(\TT X)$ at the point $\FF$ is isomorphic to  $\Hom(\TT\FF,N_\FF)$. Let $\psi:\Hom(\TT\FF,N_\FF)\to \TT_\FF \Quot(\TT X)$ be the canonical isomorphism. Using the ideas presented in \cite{GM} we can easily relate the leaf complex of $\FF$ to the tangent space to $\inv$ at the point $\FF$.
 
\begin{teo}\label{tangente} Let $\FF$ be a foliation on $X$, $\OO(\FF)$ its orbit under the natural action of $\Aut(X)$ on \emph{$\inv$} and $L\to \TT_\FF \OO(\FF)$ the derivative of the action at the point $\FF$. The following diagram of vector spaces is commutative and has exact rows:
\[\begin{tikzcd}
 0 \arrow[r] & \aut(\FF) \arrow[r] & L \arrow[d, "H^0(d_0)"]\arrow[r] & \TT_\FF \OO(\FF) \ \arrow[d, hookrightarrow]\arrow[r] & 0 \\
 & 0\arrow[r] &  \ker(H^0(d_1)) \arrow[r, "\psi" ] & \TT_\FF \emph{\inv} \arrow[r] & 0.
 \end{tikzcd}\]

\end{teo}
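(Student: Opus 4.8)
The plan is to read both rows off the deformation theory of $\FF$ inside $\Quot(\TT X)$ and to compute the differential of the orbit map explicitly by a Lie-derivative calculation. Write $D=Spec\,\CC[\epsilon]/(\epsilon^2)$, so that a tangent vector is a $D$-point supported at the given point. I would begin with the bottom row, namely that $\psi$ restricts to an isomorphism $\ker(H^0(d_1))\xrightarrow{\sim}\TT_\FF\inv$. Recall $\psi$ identifies $\Hom(\TT\FF,N_\FF)$ with $\TT_\FF\Quot(\TT X)$ by sending $\alpha$ to the first-order deformation of the subsheaf $\TT\FF\subseteq\TT X$ whose local sections are $w+\epsilon\,\widetilde\alpha(w)$, for any local lift $\widetilde\alpha:\TT\FF\to\TT X$ of $\alpha$. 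Such a $D$-point of $\Quot(\TT X)$ lies in $\inv$ exactly when the deformed subsheaf is closed under the relative bracket: torsion-freeness of the normal sheaf on the single fibre is automatic (the fibre is $N_\FF$), and flatness over $D$ is built into the $\Quot$-functor. Expanding $[w_1+\epsilon\widetilde\alpha(w_1),\,w_2+\epsilon\widetilde\alpha(w_2)]$ modulo $\epsilon^2$ and projecting to $N_\FF$, the order-one term coincides, up to the sign convention in the definition of $d_1$, with $H^0(d_1)(\alpha)(w_1,w_2)$; here one uses involutivity of $\TT\FF$ to see that $\overline{[w_1,\widetilde\alpha(w_2)]}$ depends only on $\alpha(w_2)$ and not on the lift. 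Hence first-order involutivity is precisely $H^0(d_1)(\alpha)=0$, giving $\psi^{-1}(\TT_\FF\inv)=\ker(H^0(d_1))$ and the bottom row.

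Next I would compute the left vertical map and check that the square commutes. The orbit map $\mu:\Aut(X)\to\inv$, $g\mapsto g\cdot\FF$, has, in characteristic zero, differential $d\mu_e:L\to\TT_\FF\OO(\FF)$ that is surjective by definition of the tangent space to an orbit. For $v\in L$ the $D$-point $\exp(\epsilon v)\cdot\FF$ deforms $\TT\FF$ by pushing forward its sections, and the Lie-derivative formula $\tfrac{d}{d\epsilon}\big|_0(\exp(\epsilon v))_*w=[v,w]$ shows that the class in $\Hom(\TT\FF,N_\FF)$ attached to it is $w\mapsto\overline{[v,w]}$, up to sign, i.e. $H^0(d_0)(v)$. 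Therefore $\iota\circ d\mu_e=\psi\circ H^0(d_0)$, where $\iota:\TT_\FF\OO(\FF)\hookrightarrow\TT_\FF\inv$ is the inclusion (the signs being absorbed into the conventions for the $\Aut(X)$-action and for $\psi$); this is the commutativity of the square. Since $L_\FF$ is a complex, $H^0(d_1)\circ H^0(d_0)=0$, so $H^0(d_0)$ indeed lands in $\ker(H^0(d_1))$ as drawn.

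It remains to identify the kernel in the top row. As $\iota$ and $\psi$ are injective, the commuting square gives $\ker(d\mu_e)=\ker(H^0(d_0))$. On the other hand $\ker(d\mu_e)$ is the Lie algebra of the stabiliser of $\FF$ in $\Aut(X)$, i.e. the infinitesimal symmetries $\{v\in L:\ [v,\TT\FF]\subseteq\TT\FF\}$, and the two descriptions agree because $[v,w]\in\TT\FF$ for all $w\in\TT\FF$ is exactly $\overline{[v,w]}=0$. Every global section of $\TT\FF$ is such a symmetry by involutivity, so $H^0(X,\TT\FF)$ is a subspace of $\ker(H^0(d_0))$; the quotient is $Sym(\FF)$, and since these are $\CC$-vector spaces the resulting short exact sequence splits, yielding $\ker(H^0(d_0))=Sym(\FF)\oplus H^0(X,\TT\FF)$. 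Combined with the surjectivity of $d\mu_e$ onto $\TT_\FF\OO(\FF)$, this gives exactness of the top row and completes the diagram.

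The step I expect to be the main obstacle is the bottom-row computation: carrying out the first-order expansion of the bracket cleanly, verifying that $\overline{[w_i,\widetilde\alpha(w_j)]}$ is independent of the chosen lift $\widetilde\alpha$ (where involutivity of $\TT\FF$ is essential), and matching signs with the stated formulas for $d_0$ and $d_1$. A secondary point requiring care is confirming that the conditions cutting out $\inv$ inside $\Quot(\TT X)$—torsion-freeness of the normal sheaf on fibres and flatness—impose no first-order constraint beyond involutivity, so that $\TT_\FF\inv$ is genuinely defined by $H^0(d_1)=0$ alone.
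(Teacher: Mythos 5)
Your proof is correct, but it takes a more self-contained route than the paper. The paper handles the bottom row by citing G\'omez-Mont's description of first-order deformations of the pair $(X,\FF)$ as the hypercohomology $\mathbb{H}^1(X,L_\FF)$ of the leaf complex, and then specializes to deformations of the form $(0,\eta)$ where $X$ is held fixed: the compatibility equations force $\eta$ to be a global section of $\Hom(\TT\FF,N_\FF)$ and the integrability equation becomes $H^0(d_1)(\eta)=0$. You instead recover the same identification by a direct dual-numbers computation inside $\Quot(\TT X)$: expanding the bracket of $w+\epsilon\widetilde\alpha(w)$ to first order and using involutivity of $\TT\FF$ to see that $\overline{[w_i,\widetilde\alpha(w_j)]}$ is independent of the lift. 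This buys independence from \cite{GM} at the cost of redoing a cocycle computation that the citation packages; your observation that torsion-freeness on the fibre and flatness over $D$ impose no extra first-order condition (so $\TT_\FF\inv=\TT_\FF\ov{\inv}$ is cut out by $H^0(d_1)=0$ alone) is a point the paper leaves implicit and is worth making explicit. Your treatment of the square and the top row (Lie-derivative computation of $d\mu_e$, identification of $\ker(H^0(d_0))$ with the normalizer of $\TT\FF$ in $L$, hence with $Sym(\FF)\oplus H^0(X,\TT\FF)$) is likewise more detailed than the paper, which leaves these steps essentially unargued; the only caveat is that $Sym(\FF)$ is defined in the paper only as a set of individual vector fields, so your reading of it as a chosen linear complement of $H^0(X,\TT\FF)$ in the normalizer is an interpretation, albeit the one the statement clearly intends.
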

\begin{dem} The deformation problem considered in \cite{GM} includes possible deformations of $X$. There, the author proves that the vector space consisting of first order deformations of $\FF$ coincides with the first hypercohomology group  $\mathbb{H}^1(X,L_\FF)$ of the leaf complex (see \cite[Theorem 2.4]{GM}. This is, if $\UU$ is a sufficiently fine cover of $X$ then giving a first order deformation $\mathcal{X}$ of $X$ and an involutive subsheaf $\FF'\subseteq \TT \mathcal{X}_{Spec(D)}$ is equivalent to specifying a $1$-cocycle $\theta\in C^1(\UU,\TT X)$ (which defines $\mathcal{X}$) and an element $\eta\in C^0(\UU,Hom(\TT\FF, N_\FF))$ satisfying some equations of compatibility and integrability.

In order to apply this correspondence to our deformation problem we have to consider elements of the form
$$(0,\eta) \in C^1(\UU,\TT X)\oplus C^0(\UU, Hom(\TT\FF, N_\FF)).$$
In this case, the compatibility equations indicate that the element $\eta$ defines a global section $\eta\in H^0(X,Hom(\TT\FF,N_\FF))$. The integrability condition on the other hand is exactly $d_1(\eta)=0$.

Alternatively, one can apply \cite[Theorem 1.4]{GM} to the case where $\mathcal{X}$ is the trivial deformation of $X$. 
\end{dem}

Following \cite{deMed} every foliation $\FF$ on $X$ can be defined by an \emph{integrable} and \emph{locally decomposable away from its singular points} differential $q$-form as follows: for every foliation $\FF$ of codimension $q$ on $X$ we can consider the corresponding monomorphism $\det(N_\FF)^\vee\hookrightarrow \Omega^q_X$, which is induced by a unique element 
\[ \omega_\FF\in H^0(X,\Omega^q_X\otimes \det(N_\FF)) \]
up to scalar multiplication. Thus $\omega_\FF$ satisfies the following properties:
\begin{enumerate}
	\item around every point $p$ such that $\omega_\FF(p)\neq 0$ there exist local $1$-forms $\omega_i$ such that $\omega_\FF=\omega_1\wedge\dots\wedge\omega_q$ ( i.e., $\omega_\FF$ \emph{is decomposable around} $p$), 
	\item $\ker(\omega_\FF)=\TT \FF$ is involutive ($\omega_\FF$ \emph{is integrable}) and
	\item $Z(\omega_\FF)=Sing(\FF)$ is of codimension at least $2$.
\end{enumerate}
The first condition is equivalent to requiring that $\omega_\FF$ satisfies the Plücker equations
\[\tag{P} \imath_v(\omega_\FF)\wedge\omega_\FF=0 \]
for every local section $v$ of $\bigwedge^{q-1}\TT X$. Integrability on the other hand corresponds to the Frobenius integrability condition 
\[\tag{F} \imath_v(\omega_\FF)\wedge d\omega_\FF=0 \]
for every local section $v$ of $\bigwedge^{q-1}\TT X$.
Also, the zero locus of $\omega_\FF$ coincides with the singular scheme of $\FF$.

We will be specially interested in the following type of singularities.
\begin{deff} Let $\FF$ be a codimension $1$ foliation on $X$. We say that $p\in X$ is a \emph{Kupka singularity} if $\omega_\FF(p)=0$ and $d\omega_\FF(p)\neq 0$. We will denote by $\KK(\FF)$ the set of Kupka points of $\FF$. 
\end{deff}

\begin{obs} Around a point $p\in \KK(\FF)$ the foliation $\FF$ can be described as a pullback of a germ of foliation at $(\CC^2,0)$ with an isolated singularity at the origin. In particular, $\KK(\FF)$ is pure of codimension $2$. For further details on the local structure of $\FF$ around a Kupka point see  [Capitulo 1.4, \cite{LN}].
\end{obs}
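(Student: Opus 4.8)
The statement is the classical Kupka normal form, so the plan is to reproduce its proof. The assertion is local, so I would fix a Kupka point $p$ and a trivialization of $\det(N_\FF)$ on a contractible neighborhood $U$ of $p$, under which $\omega_\FF$ becomes an honest holomorphic $1$-form $\omega$ on $U$ with $\omega(p)=0$ and $d\omega(p)\neq 0$, still satisfying the Frobenius condition $\omega\wedge d\omega=0$. The goal is to show that in suitable holomorphic coordinates $\omega$ depends on only two of them; the whole argument then reduces to producing a Darboux normal form for the closed $2$-form $d\omega$.

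First I would determine the rank of $\Omega:=d\omega$ near $p$. On the dense open set $U\setminus Sing(\FF)$ --- dense because $Sing(\FF)=Z(\omega_\FF)$ has codimension $\geq 2$ --- we have $\omega\neq 0$, and the integrability identity $\omega\wedge\Omega=0$ lets us divide: completing $\omega$ to a local coframe and expanding $\Omega$ in it shows $\Omega=\omega\wedge\beta$ for some $1$-form $\beta$, so $\Omega$ is decomposable and $\Omega\wedge\Omega=0$ there. By continuity $\Omega\wedge\Omega=0$ on all of $U$, i.e. $\operatorname{rank}\Omega\leq 2$ throughout, while $\Omega(p)\neq 0$ forces $\operatorname{rank}\Omega=2$ at $p$; since the locus $\{\operatorname{rank}\Omega\geq 2\}$ is open, $\Omega$ has constant rank $2$ on a neighborhood of $p$.

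Now $\Omega$ is a closed holomorphic $2$-form of constant rank $2$, so by the holomorphic Darboux theorem there are coordinates $(x_1,\dots,x_n)$ centered at $p$ with $\Omega=dx_1\wedge dx_2$. Feeding this back into integrability, $\omega\wedge dx_1\wedge dx_2=0$ forces $\omega=a\,dx_1+b\,dx_2$ for holomorphic $a,b$; comparing $d\omega$ with $dx_1\wedge dx_2$ then kills all partial derivatives $\partial a/\partial x_k$ and $\partial b/\partial x_k$ for $k\geq 3$, so $a,b$ are functions of $x_1,x_2$ alone (and $\partial_1 b-\partial_2 a=1$). Hence $\omega=\pi^*\omega_0$ for the projection $\pi(x)=(x_1,x_2)$ and the germ $\omega_0=a\,dx_1+b\,dx_2$ at $(\CC^2,0)$, exhibiting $\FF$ locally as the pullback $\pi^*\FF_0$.

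It remains to see that $\FF_0$ has an isolated singularity and to deduce purity. Since $Z(\omega)=\pi^{-1}(Z(\omega_0))$ equals $Sing(\FF)$, which has codimension $\geq 2$ in $U$, and $\pi$ is a submersion, $Z(\omega_0)$ must have codimension $\geq 2$ in $(\CC^2,0)$, i.e. it reduces to the origin: the singularity of $\FF_0$ is isolated. Finally, near $p$ one has $d\omega\neq 0$ everywhere, so $\KK(\FF)\cap U=\{\omega=0\}=\{x_1=x_2=0\}$, which is smooth of codimension $2$; as $p$ was arbitrary, $\KK(\FF)$ is pure of codimension $2$. The one nontrivial input is the constant-rank holomorphic Darboux theorem, and the only delicate point in applying it is the preliminary rank computation for $d\omega$, which is exactly where the integrability hypothesis and the codimension of the singular locus are genuinely used.
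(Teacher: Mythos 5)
Your argument is correct and is the classical proof of Kupka's local structure theorem (division of $d\omega$ by $\omega$ off the singular set to get $d\omega\wedge d\omega=0$, constant-rank holomorphic Darboux, then integrability forcing $\omega$ to depend on $x_1,x_2$ only). The paper gives no proof of this remark, deferring to [Capitulo 1.4, \cite{LN}], and the argument there is essentially the one you reproduce, so there is nothing to reconcile.
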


\begin{deff} \label{reeb} Let $\FF$ be a codimension $1$ foliation. We say that a singularity $p\in X$ is of \emph{Reeb-type} if there exists an anallytic neighborhood $\UU$ of $p$ such that $\FF\vert_\UU$ is the foliation induced by some $\omega=\sum_{i=1}^n f_i dz_i$, where $f_i(p)=0$ for every $i$ and $(df_1)_p,\dots,(df_n)_p$ are linearly independent.
\end{deff}

Conversely, for a line bundle $\LL$ on $X$ every element $\omega\in H^0(X,\Omega^q_X\otimes \LL)$ satisfying conditions (P), (F) and $\codim(Z(\omega))\geq 2$ defines a singular foliation $\FF_\omega$ of codimension $q$ with tangent sheaf 
\[\TT\FF_\omega=\ker(\omega) \]
and such that $\det(N_\FF)=\LL$.
Moreover, two such forms induce the same foliation if and only if they differ by a multiplicative constant $\lambda\in H^0(X,\OO_X^*)=\CC$. This motivates the following:

\begin{deff} Let $\LL$ be a line bundle on $X$. The space of foliations of codimension $q$ and degree $\LL$ on $X$ is defined as 
\[ \FF^q(X,\LL)=\{[\omega]\in \PP H^0(X,\Omega^q_X\otimes \LL) \hspace{0.1cm}|\hspace{0.1cm} \omega\text{ satisfies $\codim(Z(\omega))\geq 2$, (P), (F)} \} .\]
\end{deff}

In the case where $Pic(X)$ is discrete, this scheme also admits a natural action of $\Aut^0(X)$. A point $\FF\in \FF^q(X,\LL)$ is said to be rigid if its orbit contains a non-empty Zariski open subset. 

\begin{obs} \label{familiataut}Using the restriction of the tautological sequence on $\PP H^0(X,\Omega^q_X\otimes \LL)$ we can construct the tautological family of foliations $\FF^q_\LL$ with base $\FF^q(X,\LL)$, i.e., the family whose restriction to the fiber over each $[\omega]$ is the foliation $\FF_\omega$. 
\end{obs}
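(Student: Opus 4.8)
The plan is to construct a single universal $q$-form over the whole projective space of forms and then restrict it to $\FF^q(X,\LL)$. Write $V=H^0(X,\Omega^q_X\otimes\LL)$ and $P=\PP V$, and let $p_X\colon X\times P\to X$ and $p_P\colon X\times P\to P$ be the two projections. First I would recall the tautological exact sequence $0\to\OO_P(-1)\to V\otimes_\CC\OO_P\to\Q\to 0$ on $P$ together with the evaluation morphism $\mathrm{ev}\colon V\otimes_\CC\OO_X\to\Omega^q_X\otimes\LL$ of global sections. Pulling both back to $X\times P$ and composing the pullback of the tautological inclusion with the pullback of $\mathrm{ev}$ produces a morphism $p_P^*\OO_P(-1)\to p_X^*(\Omega^q_X\otimes\LL)$, equivalently a canonical global section
$$\omega_P\in H^0\!\big(X\times P,\ p_X^*\Omega^q_X\otimes\LL_P\big),\qquad \LL_P:=p_X^*\LL\otimes p_P^*\OO_P(1).$$
By construction its restriction to a fibre $X\times\{[\omega]\}$ is, under the canonical identification of the fibre of $\OO_P(1)$ at $[\omega]$ with $(\CC\omega)^\vee$, exactly $\omega$.

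Next I would pass to $S:=\FF^q(X,\LL)\subseteq P$ and set $\omega_S:=\omega_P|_{X\times S}$, $\LL_S:=\LL_P|_{X\times S}$. Since $\TT X_S=p_X^*\TT X$, contraction with $\omega_S$ yields an $\OO_{X\times S}$-linear map
$$\imath_{(-)}\omega_S\colon\ p_X^*\TT X\longrightarrow p_X^*\Omega^{q-1}_X\otimes\LL_S,\qquad v\longmapsto \imath_v\omega_S,$$
and I would define $\TT\FF:=\ker(\imath_{(-)}\omega_S)$ and $N_\FF:=p_X^*\TT X/\TT\FF$; this is the proposed $\FF^q_\LL$. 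The key observation is that $S$ is cut out of $P$ precisely as the locus where the sections obtained by substituting $\omega_P$ into the left-hand sides of the Plücker equations (P) and the Frobenius equations (F), for all local sections $v$ of $\bigwedge^{q-1}\TT X$, vanish, intersected with the open set on which $Z(\omega_P)$ has fibrewise codimension $\ge 2$. Consequently $\omega_S$ satisfies the relative analogues of (P), (F) and the codimension condition identically over $S$.

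It then remains to verify that $(\TT\FF,N_\FF)$ is a family of foliations of codimension $q$ in the sense of the earlier definition and that its fibre over $[\omega]$ is $\FF_\omega$. Involutivity of $\TT\FF$ under the relative Lie bracket is the content of the relative Frobenius condition (F) and is checked exactly as in the absolute case. The part where I expect the real work to lie is showing that $N_\FF$ has rank $q$ and is torsion-free on each fibre: on the complement of the relative singular scheme $Z(\omega_S)$ the form $\omega_S$ is a nowhere-vanishing locally decomposable relative $q$-form by (P), so there $\imath_{(-)}\omega_S$ has locally free image of rank $q$ and $\TT\FF$, $N_\FF$ are locally free of ranks $n-q$ and $q$. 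Since $\TT\FF$ is the kernel of a map to a locally free sheaf it is saturated in $p_X^*\TT X$, so $N_\FF$ embeds into $p_X^*\Omega^{q-1}_X\otimes\LL_S$ and is torsion-free on $X\times S$; the main obstacle is to upgrade this to torsion-freeness on each fibre and to commute the formation of $\ker$ and of the quotient with restriction to fibres. This is exactly where the fibrewise bound $\codim(Z(\omega_S)\cap X_{[\omega]})\ge 2$ enters: it forces $\TT\FF|_{X_{[\omega]}}=\ker(\omega)=\TT\FF_\omega$ away from codimension $2$ and hence everywhere by saturation, identifying $N_\FF|_{X_{[\omega]}}$ with the torsion-free rank-$q$ sheaf $N_{\FF_\omega}$, so that the fibre of the family over $[\omega]$ is the foliation $\FF_\omega$, as claimed.
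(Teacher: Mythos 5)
Your construction is exactly the one the paper intends: Remark \ref{familiataut} is stated without proof, and its one-line indication (restrict the tautological sequence on $\PP H^0(X,\Omega^q_X\otimes\LL)$, obtain the universal twisted relative $q$-form, take the kernel of contraction) is precisely what you carry out. Your description of the universal section $\omega_P$, its restriction to $S=\FF^q(X,\LL)$, the relative versions of (P) and (F), and the saturation/torsion-freeness of $\TT\FF$ and $N_\FF$ \emph{on the total space} $X\times S$ are all correct, and you rightly identify the fibrewise statement as the only place where real work is needed.

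However, the step that is supposed to do that work does not hold as written. You argue that the fibrewise bound $\codim(Z(\omega))\geq 2$ forces $\TT\FF|_{X_{[\omega]}}=\TT\FF_\omega$ ``away from codimension $2$ and hence everywhere by saturation.'' The saturation you established is that of $\TT\FF=\ker(\imath_{(-)}\omega_S)$ inside $\TT X_S$ on $X\times S$; but saturation (and, more generally, the formation of kernels) does not commute with restriction to a fibre. What the codimension bound actually gives is only that the image of $\TT\FF|_{X_{[\omega]}}\to \TT X$ is a subsheaf of $\TT\FF_\omega$ agreeing with it outside a set of codimension $\geq 2$; a priori it could be a non-saturated subsheaf of the form $\II_Z\cdot\TT\FF_\omega$ with $Z$ of codimension $\geq 2$, in which case $N_\FF|_{X_{[\omega]}}$ differs from $N_{\FF_\omega}$ by torsion supported on $Z$. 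Concretely, a local section $v$ of $\ker(\omega)$ lies in this image only if it lifts to the relative kernel, which at first order imposes the nontrivial condition $\imath_v\omega_1\in\im(\imath_{(-)}\omega)$ for every tangent direction $\omega_1$ at $[\omega]$. Deciding when kernel/cokernel formation does commute with restriction is exactly the content of the results of \cite{Quall} (Theorems 7.8 and 8.13) that the paper invokes later under the Kupka/Reeb hypotheses in Theorem \ref{teoformas}, and it is also why Proposition \ref{compararinv} passes to the flattening stratification of $N_{\FF^q_\LL}$ rather than treating the tautological family as flat. So either the Remark's ``restriction to the fiber'' should be read in the weaker sense that the fibre of the family determines $\FF_\omega$ (its tangent subsheaf saturates to $\TT\FF_\omega$, and $N_\FF|_{X_{[\omega]}}$ maps onto $N_{\FF_\omega}$ with torsion kernel), or your last step needs a genuine lifting argument; the codimension bound plus total-space saturation alone do not supply it.
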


We have now defined two spaces paremetrizing foliations on $X$ (with different notions of continuity). In the case $Pic(X)$ is discrete, we can indeed compare their geometries.

\begin{prop} \label{compararinv}Suppose that $Pic(X)$ is discrete and let $\FF_0$ be a codimension $q$ foliation on $X$. Let $\LL=\det(N_{\FF_0})$ and $\C\subseteq \FF^q(X,\LL)$ be the stratum of the flattening stratification of $N_{\FF^q_\LL}$ passing through $\FF_0$. Then there exists a neighborhood $\UU\subseteq \emph{\inv}$ of $\FF$ such that the morphism
$\omega: \UU\to \C $
defined as  $\omega(\FF')= [\omega_{\FF'}]$
is an \emph{$\Aut^0(X)$}-equivariant isomorphism around $\FF_0$ and $[\omega_{\FF_0}]$.
\end{prop}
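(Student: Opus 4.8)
The plan is to exhibit $\omega$ together with an explicit inverse as mutually inverse morphisms of schemes between a neighborhood $\UU$ of $\FF_0$ in $\inv$ and a neighborhood of $[\omega_{\FF_0}]$ in $\C$, and then to deduce $\Aut(X)$-equivariance from the naturality of both constructions. Since $\inv$ and $\FF^q(X,\LL)$ (hence the locally closed stratum $\C$) are representable, it suffices to describe the two morphisms at the level of families and to verify that they are inverse to one another. Throughout I would shrink $\UU$ so as to make it connected.

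First I would build $\omega$ by applying the construction of \cite{deMed} relatively to the universal family $0\rightarrow \TT\FF_\inv\rightarrow \TT X_\inv\rightarrow N_{\FF_\inv}\rightarrow 0$ restricted to a neighborhood of $\FF_0$. Over the locus where the relative singular scheme is empty, the $q$-th exterior power of the surjection $\TT X_\inv\rightarrow N_{\FF_\inv}$ gives a surjection $\bigwedge^q\TT X_\inv\rightarrow \det(N_{\FF_\inv})$ whose dual is a monomorphism $\det(N_{\FF_\inv})^\vee\hookrightarrow \Omega^q_X$; because $X$ is smooth, $\Omega^q_X$ is locally free and the relative singular scheme has fiberwise codimension $\geq 2$, this monomorphism extends across the singular locus and defines a relative $q$-form valued in $\det(N_{\FF_\inv})$. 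Here the hypothesis that $Pic(X)$ is discrete is essential: on the connected neighborhood the class $\det(N_{\FF_\inv})_s$ is constant and equal to $\LL=\det(N_{\FF_0})$, so the output is a family of $\LL$-valued $q$-forms, hence a morphism $\UU\rightarrow \PP H^0(X,\Omega^q_X\otimes \LL)$ by the universal property of projective space. Each fiber satisfies (P), (F) and $\codim(Z(\omega))\geq 2$, so this morphism lands in $\FF^q(X,\LL)$; and since the normal sheaf of the pulled-back tautological family is exactly $N_{\FF_\inv}$, which is flat over $\UU$, the morphism factors through the flattening stratum through $\FF_0$, namely $\C$.

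Conversely, I would restrict the tautological family $\FF^q_\LL$ of Remark \ref{familiataut} to $\C$. By definition of the flattening stratification its normal sheaf is flat over $\C$, while each fiber is the foliation $\FF_\omega$, whose normal sheaf is torsion-free of rank $q$; thus this is a flat family of foliations in the sense of $\inv$, and representability of $\inv$ yields a morphism $\C\rightarrow \inv$ carrying $[\omega_{\FF_0}]$ to $\FF_0$, landing in $\UU$ after shrinking. To check that the two morphisms are mutually inverse I would invoke the two identities already recorded: for a foliation $\FF'$ one has $\ker(\omega_{\FF'})=\TT\FF'$, so forming the $q$-form and then taking its kernel recovers $\FF'$; and for an integrable form $\omega$ one has $\TT\FF_\omega=\ker(\omega)$ with $\omega_{\FF_\omega}=\omega$ up to scalar, so the opposite composition is the identity. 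As each construction commutes with base change, these identities hold in families and the morphisms are inverse.

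Finally, the assignment $\FF\mapsto \omega_\FF$ is defined intrinsically from the defining exact sequence, hence commutes with pullback along any $g\in \Aut(X)$; since the actions on $\inv$ and on $\FF^q(X,\LL)$ are the natural ones and $Pic(X)$ discrete forces the $\Aut(X)$-action to fix the class of $\LL$, the isomorphism $\omega$ is $\Aut(X)$-equivariant near $\FF_0$ and $[\omega_{\FF_0}]$. I expect the main obstacle to be precisely the relative construction of the preceding paragraph: one must verify that the de Medeiros $q$-form is compatible with arbitrary base change and that the extension of $\det(N_\FF)^\vee\hookrightarrow \Omega^q_X$ across the relative singular locus is well-behaved in families, so that $\omega$ is a genuine morphism of schemes rather than merely a map on closed points. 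Once this is established, the identification of the image with the stratum $\C$ and the mutual-inverse property follow formally from the pointwise identities and the universal properties of the schemes involved.
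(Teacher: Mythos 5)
Your argument follows the paper's proof in all essentials: the relative de Medeiros construction applied to the universal family, discreteness of $Pic(X)$ to identify $\det(N_{\FF_\inv})$ with the pullback of $\LL$ (the paper makes this step precise via the Seesaw theorem, which also disposes of a possible twist by a line bundle coming from the base before projectivizing), factorization through the flattening stratum $\C$, and the mutual-inverse check via the universal and tautological families. The only cosmetic difference is that you build the inverse explicitly from the tautological family over $\C$, whereas the paper obtains the same conclusion by applying the forward construction to $Spec(\OO_{\inv,\FF_0})$; both reduce to the same pointwise identities and universal properties.
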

\begin{proof} Let $\FF$ be a flat family of codimension $q$ foliations with affine base $S=Spec(A)$ and $s\in S$ some point such that $\FF_s=\FF_0$. The relative version of the argument above yields a unique (up to multipllication by units of $A$) global section
\[ \omega_\FF\in H^0(X\times S, \Omega^q_{X\times S|S} \otimes \det(N_\FF)) \]
such that the kernel of the contraction 
\[ \omega_\FF: \TT X_S\to \Omega^{q-1}_{X\times S|S} \otimes \det(N_\FF)\] 
coincides with $\TT \FF$. Its image is canonically isomorphic to $N_\FF$ (which is flat), which implies that for every $s\in S$ we have 
\[ \tag{$\star$} \ker(\omega_\FF(s))=\ker(\omega_\FF)_s=\TT \FF_s .\]
Also, being $Pic(X)$ discrete by the Seesaw Theorem [Corollary 6, p.54, \cite{AbV}] we know that $\det(N_\FF)=p_1^*\det(N_{\FF_0})$ and  therefore 
\[ \Omega^q_{X\times S|S}\otimes \det(N_\FF)=p_1^*(\Omega^q_X\otimes \LL).\]  
This is, the section $\omega_\FF$ defines an element of $H^0(S,H^0(X,\Omega^q_X\otimes \LL)\otimes_\CC \OO_S)$ or equivalently a morphism $S\to H^0(\Omega^q_X\otimes \LL)$. By $(\star)$ (and the fact that $\omega_\FF$ is integrable and locally decomposable away from its singular points), this descends to a morphism 
$$ \omega_\FF: S\to \FF^q(X,\LL),$$
uniquely determined by $\FF$ and such that the family $\FF$ is the pullback under $\omega_\FF$ of the tautological family over $\FF^q(X,\LL)$. Since $\FF$ is indeed a flat family $\omega_\FF$ must factor through the flattening stratification associated to $N_{\FF^q_\LL}$. 

Now consider the Zariski open set $\UU'=\{ \FF\in \inv  \hspace{0.1cm}|\hspace{0.1cm} \det(N_\FF)=\LL \}$ and let $\UU\subseteq \UU'$ be the connected component passing through $\FF_0$, which is of course $\Aut^0(X)$-invariant. By applying the argument above to the restriction of $\FF_\inv$ to an affine cover of $\UU$ we get an $\Aut^0$-equivariant morphism $\omega=\omega_{\FF_\inv}:\UU\to\C$ satisfying $\omega(\FF')=[\omega_{\FF'}]$ for every $\FF'\in \UU$ and such that the family $\FF_\inv\vert_\UU$ is the pullback under $\omega$ of $\FF^q_\LL\vert_\C$. In particular, the universal family over  $S=Spec(\OO_{\inv,\FF_0})$  is the pullback under $\omega$  of the tautological family over $\C$, which by the representability of $\INV$ easily implies that $\omega$ induces an isomorphism locally around $\FF_0$ and $[\omega_{\FF_0}]$.
\end{proof}

\section{L-foliations}

Let $G\subseteq \Aut(X)$ be a connected Lie group with Lie algebra $\g$. Let us suppose that the natural action of $G$ on $X$ satisfies that $\dim G\cdot x$ is constant in codimension $1$. The leaves of the foliation $\FF(\g)$ induced by this action are exactly the orbits of maximal dimension. The derivative of $G\hookrightarrow \Aut(X)$ at the identity corresponds to the inclusion $\g\hookrightarrow H^0(X,\TT X)= L$. The restriction of global sections yields a morphism 
$$ \rho_\g: \g\otimes_\CC \OO_X \to \TT X$$
whose image is the tangent sheaf $\TT\FF(\g)$. As for the twisted differential form defining $\FF(\g)$, we will use the notation $\omega(\g)=\omega_{\FF(\g)}$.

\begin{deff} Let $\FF$ be a foliation on $X$. We say that $\FF$ is an \emph{L-foliation} if $\FF=\FF(\g)$ for some subalgebra  $\g\subseteq L$.
\end{deff}
As we will now see, these foliations admit some additional structures.

\begin{deff} Let $\FF$ be a foliation on $X$. We say that a global vector field $\X\in H^0(X,\TT X)$ is a symmetry of $\FF$ if $\X\in \aut(\FF)$ and $\X$ is not tangent to $\FF$. 
\end{deff}

It is worth mentioning that a general foliation does not admit neither symmetries nor algebraic leaves. The following is an adaptation of [Théoreme 1.8, \cite{CD}]. 

\begin{teo} \label{estructura} Let $\FF$ be an $L$-foliation on $X$. Then
\begin{enumerate}
	\item $\FF$ admits a symmetry, or
	\item every leaf of $\FF$ is algebraic.
\end{enumerate}
\end{teo}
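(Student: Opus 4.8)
The plan is to compare the connected Lie group $G=\exp(\g)\subseteq\Aut(X)$ with its Zariski closure inside the algebraic group $\Aut^0(X)$. Since we only care about the foliation $\FF=\FF(\g)$ itself, we may replace $\g$ by the maximal subalgebra $H^0(X,\TT\FF)$ without changing $\FF$; thus from now on I assume $\g=H^0(X,\TT\FF)$, so that a global vector field is tangent to $\FF$ precisely when it lies in $\g$. For $X$ smooth projective the identity component $\Aut^0(X)$ is a connected algebraic group with Lie algebra $L=H^0(X,\TT X)$. Let $\overline{G}\subseteq\Aut^0(X)$ be the Zariski closure of $G$; it is a connected algebraic subgroup whose Lie algebra $\overline\g$ contains $\g$ and is the smallest algebraic Lie subalgebra of $L$ with this property.

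The structural input I would invoke is Chevalley's theorem on algebraic hulls: in characteristic zero one has $[\overline\g,\overline\g]=[\g,\g]$. Consequently $[\overline\g,\g]\subseteq[\overline\g,\overline\g]=[\g,\g]\subseteq\g$, so $\g$ is an ideal of $\overline\g$, and since $\overline{G}$ is connected this means that $G$ is a normal subgroup of $\overline{G}$. I would then record the geometric consequence of normality: for any $g\in\overline{G}$ one has $g\cdot(G\cdot x)=(gGg^{-1})\cdot(gx)=G\cdot(gx)$, so $g$ permutes the $G$-orbits. In particular $g$ sends leaves of $\FF$ (the orbits of maximal dimension) to leaves, i.e. $g^*\FF=\FF$.

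The proof then splits according to whether the inclusion $\g\subseteq\overline\g$ is strict. If $\overline\g=\g$, then $G$ is a connected Lie subgroup of $\overline{G}$ with the same Lie algebra, hence $G=\overline{G}$; thus $G$ is algebraic and every orbit of its action on $X$ is a locally closed algebraic subvariety. In particular every leaf of $\FF$ is algebraic, which is alternative (2). If instead $\overline\g\supsetneq\g$, I would choose any $\X\in\overline\g\setminus\g$. By the previous paragraph the one-parameter subgroup $\exp(t\X)\subseteq\overline{G}$ satisfies $\exp(t\X)^*\FF=\FF$, while $\X\notin\g=H^0(X,\TT\FF)$ shows that $\X$ is not tangent to $\FF$. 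Hence $\X$ is a symmetry of $\FF$ and we are in alternative (1).

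The only non-formal ingredient is the passage from $\g$ to its algebraic hull $\overline\g$, and specifically the equality $[\overline\g,\overline\g]=[\g,\g]$ which forces $\g$ to be an ideal: this is exactly what upgrades the elements of $\overline\g\setminus\g$ to genuine symmetries rather than merely infinitesimal automorphisms of the ambient group, since it is what guarantees that $\exp(t\X)$ normalizes $G$ and hence preserves $\FF$. The remaining points, namely that $\Aut^0(X)$ is algebraic with Lie algebra $L$ and that orbits of an algebraic group action are algebraic, are standard, so I expect the algebraic-hull step to be the crux of the argument.
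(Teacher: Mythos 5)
Your proof is correct, and it follows the same overall dichotomy as the paper: reduce to the maximal case $\g=H^0(X,\TT\FF)$, compare the connected group $G$ generated by the flows of $\g$ with its Zariski closure $\overline{G}$, and conclude that either $G=\overline{G}$ is algebraic (so every leaf, being an orbit, is algebraic) or there is a one-parameter subgroup preserving $\FF$ but not tangent to it. The difference lies in the key lemma used to show that the extra elements of $\overline{G}$ actually preserve $\FF$. The paper observes that $\Aut(\FF)=\{\phi\in\Aut(X)\mid \phi^*\FF=\FF\}$ is a Zariski-closed subgroup of $\Aut(X)$ (it is a fiber of $\phi\mapsto[\omega_{\phi\cdot\FF}]$), so that $G\subseteq\Aut(\FF)$ forces $\overline{G}\subseteq\Aut(\FF)^0$, and any $\X\in T_e\Aut(\FF)^0\setminus\g$ is a symmetry. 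You instead invoke Chevalley's theorem on algebraic hulls, $[\overline\g,\overline\g]=[\g,\g]$, to deduce that $\g$ is an ideal of $\overline\g$, hence that $\overline{G}$ normalizes $G$ and permutes its orbits. Both arguments are sound; yours uses a heavier (though classical) structural input and in exchange yields slightly more, namely that the symmetry can be chosen inside the algebraic hull of $\g$ and normalizes $G$, whereas the paper's argument is more elementary, needing only the algebraicity of $\Aut(\FF)$, and locates the symmetry in the a priori larger group $\Aut(\FF)^0$. One small point to make explicit in your write-up: passing from ``$g$ permutes the $G$-orbits of maximal dimension'' to ``$g^*\FF=\FF$'' uses that $\TT\FF$ is saturated, so the foliation is determined by its restriction to the dense open set where the orbits have maximal dimension.
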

\begin{dem} If $\FF$ is an $L$-foliation, then $\FF=\FF(\g)$ for $\g:=H^0(X,\TT\FF)\hookrightarrow L$. Let $\Aut(X)^0$ be the connected component of the identity and $G_1$ be the connected subgroup
\[ G_1 := \langle exp(tY) \rangle_{Y\in \g} < \Aut(X) \]
generated by the flows of $\g$. 
On the other hand, let us consider the group $\Aut(\FF)^0= \{ \phi \in \Aut^0(X) \vert \phi^*\FF=\FF \}$ consisting of automorphisms of $\FF$.
Observe that $\Aut(\FF)^0$ is the fiber over $\omega_\FF$ of the map $\omega: \Aut(X)^0\to \PP H^0(X,\det(N_\FF))$ satisfying $\omega(\phi)=[\omega_{\phi\cdot \FF}]$. In particular, $\Aut(\FF)^0$ is an algebraic subgroup of $\Aut(X)^0$ containing $G_1$. Let $\ov{G_1}$ be the Zariski closure of $G_1$. Then 
$$G_1<\ov{G_1} < \Aut(\FF)^0.$$
If $G_1=\ov{G_1}$, then the orbit  $G_1\cdot x$ is algebraic for every $x\in X$. But then every leaf of $\FF$ is algebraic.

If on the other hand the above inclusion were strict, there must be an element
 $\X \in T_e \Aut(\FF)^0\setminus \g$, i.e. a symmetry of $\FF$.
\end{dem}

\begin{ej} \label{jinv}For $\g=sl_2$, let us consider the foliation $\FF(\g)$ in $X=\PP Sym^4(\CC^2)\simeq \PP^4$ associated  to the action of $\PP GL_2(\CC)$ by changes of coordinates, which was studied in \cite{notej}. This foliation satisfies condition 2 in the above Theorem, since the closure of a generic orbit coincides with a fiber of the $j$-invariant. 

The restriction of this foliation to an appropiate hyperplane yields an important construction in the theory of foliations in projective spaces, namely the \emph{exceptional component}: this restriction is induced by an action of the affine Lie algebra $\aff(\CC)$ and is rigid in $\FF^1(\PP^3,4)$. 
\end{ej}

\begin{ej} More generally, by [Théoreme 1.22,  \cite{CD}] every $L$-foliation $\FF(\g)$ on $\PP^n$ of codimension $1$ such that $[\g,\g]=\g$ satisfies the second condition in Theorem \ref{estructura}. 

In \cite{CP} the authors study foliations on $\PP^n$ induced by infinitesimal actions of Lie subalgebras $\g\subseteq H^0(\PP^n,\TT\PP^n)$ which are locally free in codimension one, i.e., such that the morphism $\g\otimes_\CC \OO_X\to \TT\FF(\g)$ is an isomorphism. Regarding their stability, in [Corollary 6.1, \cite{CP}] they show that under some hypotheses on the singular scheme of $\FF(\g)$, the elements $\g$ and $\FF(\g)$ have isomorphic neighborhoods in $S(d)$ and $\FF^q(\PP^n,n+1)$ respectively. As a consequence, they are able to construct irreducible components of this space consisting generically of $L$-foliations.
\end{ej}

\subsection{Families of subalgebras and foliations}
The last example shows that under certain hypotheses it is possible to define a morphism between $S(d)$ and some moduli space of foliations. We will now explain the formalisms that lead to a similar constrution around every point in the moduli space of subalgebras.

\begin{deff} Let $\GG\subseteq L_S$ be a family of Lie subalgebras parametrized by some scheme $S$. The associated family $\FF(\GG)$ of  involutive distributions is the family whose tangent sheaf is the image of the morphism
$$ \rho_{\GG}: \GG \otimes \OO_{X\times S} \to \TT X_S.$$
\end{deff}

Since the scheme $S(d)$ is equipped with a universal family $\GG_{S(d)}\subseteq L_{S(d)}$ of subalgebras of $L$ of dimension $d$, it is only natural to consider the case $S=S(d)$. We will use the notation $\FF_d:=\FF(\GG_{S(d)})$ for the induced family
\begin{equation*} \FF_d: 0 \rightarrow \TT\FF_d\rightarrow \TT X_{S(d)} \rightarrow N_d\rightarrow 0. 
\end{equation*}
Being this family not necessarily flat, we have to consider the flattening stratification
$$\coprod_i S(d)_i \to S(d)$$
associated to the sheaf $N_d$. We thus get over each stratum $S(d)_i$ a flat family $\FF_i:=\FF(\GG_{S(d)_i})$ and therefore a morphism $\phi_i: S(d)_i\to \ov{\inv}$ such that 
$$0 \rightarrow \TT\FF_i\rightarrow \TT X_{S(d)_i} \rightarrow N_d\rightarrow 0 $$
is the pullback under $\phi_i$ of the universal family over $\overline{\inv}$. Observe that if $\g\in S(d)_i$ then its orbit $\OO(\g)$ under the action of $\Aut(X)$ is contained in $S(d)_i$. It follows from the above that $\phi_i$ is $\Aut(X)$-equivariant. In particular, we have $\phi_i(\OO(\g))=\OO(\FF(\g))$.

\begin{deff} We will denote $\phi:\coprod_i S(d)_i\to \overline{\inv}$ the morphism induced by the  $\phi_i$'s. 
 \end{deff}
 
For a family $\GG\subseteq L_S$ of subalgebras over some scheme $S$ we have $\GG\subseteq H^0(S,p_{2,*}\TT\FF(\GG))$. In general, this may be a strict inclusion. In what follows, we want to be able to recover the family of subalgebras from its associated family of foliations.  The following definition was first introduced in \cite{CD}:

\begin{deff}Let $\g\subseteq L$ be a subalgebra. We say that $\g$ is \emph{maximal} if $\g=H^0(X,\TT\FF(\g))$.
\end{deff}

 \begin{obs} \label{secciones} Maximality defines an open subscheme of $S(d)$. Indeed, a point $\g\in S(d)$ is maximal if and only if the fiber $(p_{2,*}\TT\FF_d)(\g)$ has dimension less than $d+1$. These are also the points around which the quotient sheaf $p_{2,*}(\TT\FF_d)/\GG_{S_d}$ is zero. Observe also that in order to understand the geometry of $\inv$ around $\FF(\g)$ we can replace $\g$ by $H^0(X,\TT\FF(\g))$.
 \end{obs}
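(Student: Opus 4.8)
The plan is to convert the fibrewise condition of maximality---which is about the global sections $H^0(X,\TT\FF(\g))$ on each fibre---into the vanishing of a single coherent sheaf on $S(d)$, so that openness drops out of the closedness of a support. First I would record the two natural inclusions on $S(d)$. Writing $p_2\colon X\times S(d)\to S(d)$ for the projection and using $\TT X_{S(d)}=p_1^*\TT X$, one has $p_{2,*}\TT X_{S(d)}=L\otimes_\CC\OO_{S(d)}=L_{S(d)}$; applying $p_{2,*}$ to $\rho_{\GG}$ and to $\TT\FF_d\hookrightarrow\TT X_{S(d)}$ gives
\[ \GG_{S(d)}\subseteq p_{2,*}(\TT\FF_d)\subseteq L_{S(d)}, \]
all as coherent subsheaves of the locally free $L_{S(d)}$. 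Set $\mathcal M=p_{2,*}(\TT\FF_d)$ and $\mathcal Q=\mathcal M/\GG_{S(d)}$, a coherent sheaf on $S(d)$. Since $\GG_{S(d)}\hookrightarrow L_{S(d)}$ is a subbundle inclusion, the fibre map $\g=\GG_{S(d)}(\g)\to\mathcal M(\g)$ is injective (its composite with $\mathcal M(\g)\to L_{S(d)}(\g)=L$ is $\g\hookrightarrow L$), so $\dim\mathcal M(\g)\geq d$; and because $\g\subseteq H^0(X,\TT\FF(\g))$ always, maximality of $\g$ is by definition the equality $\dim H^0(X,\TT\FF(\g))=d$, i.e.\ $\dim H^0(X,\TT\FF(\g))<d+1$.

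The engine of the proof is the identification $\dim\mathcal M(\g)=h^0(X,\TT\FF(\g))$. One inequality is free: a local section of $\mathcal M$ near $\g$ is a family of global fields tangent to $\FF_d$, and its restriction to $X_\g$ lands in $\operatorname{im}(\g\otimes_\CC\OO_X\to\TT X)=\TT\FF(\g)$, so the image of $\mathcal M(\g)\to L$ sits inside $H^0(X,\TT\FF(\g))$. Granting the identification, $\g$ is maximal $\iff\dim\mathcal M(\g)=d\iff\GG_{S(d)}(\g)\to\mathcal M(\g)$ is onto $\iff\mathcal Q(\g)=0$; by Nakayama this is $\mathcal Q_\g=0$, i.e.\ $\g\notin\operatorname{Supp}(\mathcal Q)$, which is the second characterization in the statement. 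Openness then follows from either of two standard facts: the support of the coherent $\mathcal Q$ is closed, so the maximal locus is its open complement; equivalently, the fibre dimension $\g\mapsto\dim\mathcal M(\g)$ of a coherent sheaf is upper semicontinuous, so $\{\dim\mathcal M(\g)<d+1\}$ is open. The closing observation is then immediate: $\g$ and $H^0(X,\TT\FF(\g))$ have the same image under $\rho$, hence the same tangent sheaf, hence define the same foliation and the same point of $\inv$, while $H^0(X,\TT\FF(\g))$ is a subalgebra (tangent fields are closed under the bracket) that is maximal by construction.

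The hard part is exactly the identification $\dim\mathcal M(\g)=h^0(X,\TT\FF(\g))$, equivalently the surjectivity of the base-change map $\mathcal M(\g)\to H^0(X,\TT\FF(\g))$. This is where the non-flatness of $\TT\FF_d$ over $S(d)$---the very reason for the flattening stratification---bites: Theorem \ref{cybc} does not apply directly, and at a non-flat point the restriction $(\TT\FF_d)_\g$ can acquire torsion from $\operatorname{Tor}_1(N_d,k(\g))$ and differ from $\TT\FF(\g)$, so neither injectivity nor surjectivity of base change is automatic. I would attack this by proving that every global field tangent to $\FF(\g)$ extends to a nearby family tangent to $\FF_d$ (the extension mechanism of Lemma \ref{ext}, valid where the relevant $h^1$ vanishes) and by forcing injectivity through the inclusion $\mathcal M\subseteq L_{S(d)}$ into a flat sheaf; the cleanest route is to reduce to the locus where $N_d$ is flat, where $(\TT\FF_d)_\g=\TT\FF(\g)$ and cohomology-and-base-change identifies $\mathcal M(\g)$ with $H^0(X,\TT\FF(\g))$ on the nose, the maximal--nonmaximal dichotomy being insensitive to the passage to that locus.
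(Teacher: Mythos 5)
Your overall strategy coincides with the one the paper intends: the remark carries no separate proof, and its implicit argument is exactly your reduction of maximality to the vanishing of the coherent quotient $\Q=p_{2,*}(\TT\FF_d)/\GG_{S(d)}$, with openness coming from the closedness of $\operatorname{Supp}(\Q)$ (equivalently, upper semicontinuity of $\g\mapsto\dim(p_{2,*}\TT\FF_d)(\g)$). Your preliminary steps are sound: the inclusions $\GG_{S(d)}\subseteq p_{2,*}\TT\FF_d\subseteq L_{S(d)}$, the injectivity of $\g\to(p_{2,*}\TT\FF_d)(\g)$ via the composite with $L$, and the fact that sections of $p_{2,*}\TT\FF_d$ restrict into $H^0(X,\TT\FF(\g))$. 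You have also put your finger on precisely the step the paper leaves implicit: without flatness of $\TT\FF_d$ over $S(d)$, the base-change map $(p_{2,*}\TT\FF_d)(\g)\to H^0(X,\TT\FF(\g))$ need be neither injective nor surjective, and the chain ``maximal $\iff\dim(p_{2,*}\TT\FF_d)(\g)=d\iff\Q(\g)=0$'' hinges on it. Concretely, surjectivity is the extension of sections, and injectivity is obstructed by $\operatorname{Tor}_1(L_{S(d)}/p_{2,*}\TT\FF_d,\,k(\g))$; your remark that $p_{2,*}\TT\FF_d$ sits inside the locally free $L_{S(d)}$ does not force the fibre map to be injective (an ideal sheaf inside $\OO_S$ is the standard counterexample).

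The gap is that neither of your two proposed remedies closes this step at the stated level of generality. The extension mechanism of Lemma \ref{ext} requires $h^1(X_s,\FF_s)=0$ for the family being pushed forward, and the remark assumes no cohomological vanishing (that hypothesis only enters later, in Theorem \ref{isolocal}; moreover even there the relevant restriction $(\TT\FF_d)_{X_\g}$ may differ from $\TT\FF(\g)$ by torsion, so the vanishing must be transported across that comparison). Reducing to the stratum where $N_d$ is flat proves openness of the maximal locus inside the stratum, not inside $S(d)$, which is what the remark asserts and what the proof of Theorem \ref{isolocal} uses when it shrinks to $\wt\UU\subseteq S(d)$. So your write-up is an accurate and more honest account of the same argument the paper gestures at, but the identification $\dim(p_{2,*}\TT\FF_d)(\g)=h^0(X,\TT\FF(\g))$ remains unproved in your proposal; to finish, you would need either a flatness/base-change statement for $p_{2,*}\TT\FF_d$ near $\g$ or an argument bypassing fibres of the pushforward altogether (for instance, working with actual local sections of $p_{2,*}\TT\FF_d$ over a reduced neighborhood and invoking Nakayama only after establishing surjectivity of base change under the $h^1$ hypothesis available where the remark is applied).
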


\begin{ej} \label{familia1c} Let us consider the family $\GG$ on $\PP^n$ with base $S=Spec(\CC[t])$ introduced in Example \ref{familia1}. The family $\FF=\FF(\GG)$ is of dimension $3$ and generated by the elements $\X_1,\X_2,\X_3\in H^0(\PP^n\times S, \TT\PP^n_S)$, which induce the morphism 
\[ \X: \OO_{\PP^n\times S}^{\oplus 3} \xrightarrow{\sim} \TT\FF .\]
Since $\X(t):\OO_{\PP^n}^{\oplus 3}\to \TT\FF_t$ is an isomorphism for every $t\in \CC$, the family $\FF(\GG)$ is automatically flat over $S$.
\end{ej}

\subsection{Stability}
We will now prove the results stated in the introduction.
Recall that according to Remark \ref{secciones}, in order to study the geometry of $\ov{\inv}$ around $\FF(\g)$ there is no loss of generality in assuming that $\g$ is maximal.

\begin{teo} \label{isolocal} Let $\g\subseteq L$ be a maximal subalgebra of dimension $d$. Suppose further that $h^1(X, \TT\FF(\g))=0$. Then, there exist neighborhoods $\UU\subseteq \coprod_i S(d)_i$ and \emph{$\VV\subseteq \overline{\inv}$} of $\g$ and $\FF(\g)$ of respectively such that the restriction $\phi: \UU\to \VV$ is an isomorphism. In particular, if $\g\subseteq L$ is rigid then $\FF(\g)$ is rigid in \emph{$\overline{\inv}$}.
\end{teo}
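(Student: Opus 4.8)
The plan is to produce an explicit local inverse to $\phi$ and check that the two constructions are mutually inverse in neighborhoods of $\g$ and $\FF(\g)$. The inverse sends a foliation $\FF'$ to its algebra of global tangent vector fields $H^0(X,\TT\FF')$. The two hypotheses play complementary roles: maximality ensures $H^0(X,\TT\FF(\g))=\g$, so that this assignment has the correct target dimension $d$ and the composite $\g\mapsto\FF(\g)\mapsto H^0(X,\TT\FF(\g))$ is the identity at the base point; the vanishing $h^1(X,\TT\FF(\g))=0$ ensures, via Theorem \ref{cybc} and Lemma \ref{ext}, that this pointwise assignment propagates to an honest morphism of schemes on a whole neighborhood.

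First I would build the candidate inverse $\psi$. Let $\FF_{\ov{\inv}}$ be the universal family over $\ov{\inv}$. By semicontinuity there is a neighborhood $\VV\subseteq\ov{\inv}$ of $\FF(\g)$ on which $h^1(X_s,(\TT\FF_{\ov{\inv}})_s)=0$; shrinking $\VV$ and applying Theorem \ref{cybc}, the pushforward $\GG:=p_{2,*}(\TT\FF_{\ov{\inv}}|_\VV)$ is locally free and commutes with base change, and by maximality its rank is $h^0(X,\TT\FF(\g))=d$. Left-exactness of $p_{2,*}$ realizes $\GG$ as a subsheaf of $p_{2,*}\TT X_\VV=L\otimes_\CC\OO_\VV=L_\VV$, and involutivity of $\TT\FF_{\ov{\inv}}$ under the relative bracket translates, on sections pulled back from $L$, into $[\GG,\GG]\subseteq\GG$. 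Thus $\GG$ is a family of $d$-dimensional subalgebras and, by Proposition \ref{lierep}, determines a morphism $\psi:\VV\to S(d)$.

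Next I would verify the two round trips with Nakayama's lemma; in each case let $s_0$ denote the distinguished base point (mapping to $\g$, resp. to $\FF(\g)$). For $\psi\circ\phi$: given a family of subalgebras $\GG$ over $S$ with $\GG_{s_0}=\g$, the tautological inclusion $\GG\hookrightarrow p_{2,*}\TT\FF(\GG)$ is a map between locally free sheaves of rank $d$ — the target being locally free of rank $d$ by $h^1=0$ and maximality — restricting to the identity $\g\hookrightarrow H^0(X,\TT\FF(\g))=\g$ over $s_0$, hence an isomorphism nearby. For $\phi\circ\psi$: with $\GG=p_{2,*}\TT\FF'$ and $\FF'_{s_0}=\FF(\g)$, consider the evaluation $u:\GG\otimes\OO_{X\times S}\to\TT\FF'$, whose image is by definition $\TT\FF(\GG)$; base change identifies $u|_{X_{s_0}}$ with the generating map $\g\otimes\OO_X\to\TT\FF(\g)$, which is surjective. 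Since $\TT\FF'$ is flat over $S$ (being the kernel of $\TT X_S\to N_{\FF'}$ with $N_{\FF'}$ flat), Nakayama together with the properness of $X$ propagate this surjectivity to a neighborhood of the fiber $X_{s_0}$, yielding $\TT\FF(\GG)=\TT\FF'$ there. This identifies $N_{\FF(\GG)}$ with the flat sheaf $N_{\FF'}$, so $\psi$ factors through $\coprod_i S(d)_i$ and $\phi\circ\psi=\mathrm{id}$. I expect the main difficulty to be exactly this control of base change for the image sheaf $\TT\FF(\GG)$, which does not commute with restriction to fibers over all of $S(d)$; this is why the argument must be carried out over the flat strata, where the sequence $0\to\TT\FF\to\TT X\to N\to0$ remains exact upon restriction.

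With $\phi$ and $\psi$ mutually inverse on neighborhoods $\UU\subseteq\coprod_i S(d)_i$ of $\g$ and $\VV\subseteq\ov{\inv}$ of $\FF(\g)$, the restriction $\phi:\UU\to\VV$ is the desired isomorphism. Finally, the rigidity claim follows from the $\Aut(X)$-equivariance of $\phi$: if $\Aut(X)\cdot\g$ contains a nonempty Zariski open $U$, then $\phi(U\cap\UU)$ is open in $\ov{\inv}$ and contained in $\Aut(X)\cdot\FF(\g)$, so $\FF(\g)$ is rigid.
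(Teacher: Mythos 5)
Your proposal is correct and follows essentially the same route as the paper: both construct the local inverse $\VV\to S(d)$ as the pushforward $\pi_{2,*}(\TT\FF_{\ov{\inv}}\vert_\VV)\hookrightarrow L_\VV$, made locally free of rank $d$ via $h^1=0$, cohomology and base change, and maximality, and both recover $\TT\FF\vert_\VV$ from this subalgebra family using global generation of the tangent sheaves, so that flatness of the normal sheaf forces the factorization through the flattening stratification. The only differences are organizational — the paper first extends the surjection $\OO_X^{\oplus d}\twoheadrightarrow\TT\FF(\g)$ via Lemma \ref{ext} and concludes the two round trips by representability of the functors, where you argue directly with Nakayama — and these are interchangeable.
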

\begin{dem} 
Let $0\rightarrow \TT\FF \rightarrow \TT X_{\overline{\inv}}\rightarrow N_\FF\rightarrow 0$ be the universal family over $\overline{\inv}$ and $\XX_0:\OO_{X}^{\oplus d} \twoheadrightarrow \TT \FF(\g)$ the morphism induced by the action of $\g$. By Lemma \ref{ext} we can extend this morphism, i.e., there exists some neighborhood $\VV$ of $\FF(\g)$ and a morphism  $\XX_\VV:\OO_{\VV}^{\oplus d}\to \TT \FF\vert_\VV$ restricting to $\XX_0$. Without loss of generality we can assume that $\XX_\VV$ is also surjective.

We will denote by $\pi_1$ and $\pi_2$ the projections of $X\times\VV$ onto $X$ and $\VV$ respectively. By Theorem \ref{cybc} (shrinking $\VV$ if necessary) we know that $\pi_{2,*}\TT\FF\vert_\VV$ is locally free. The pushforward of $\TT\FF\hookrightarrow  \TT X_\VV$ therefore becomes a monomorphism of vector bundles
$$ \pi_{2,*}\TT\FF\vert_\VV \hookrightarrow  \pi_{2,*}\TT X_\VV=H^0(X,\TT X)\otimes \OO_\VV=L_\VV,$$
whose image is an element of $\mathfrak{Sub}_{d}(\VV)$. By the representability of this functor there exists a morphism $f:\VV\to S(d)$ such that $\pi_{2,*}\TT\FF\vert_\VV$ is the pullback under $f$ of the universal subalgebra. Let us now consider the commutative diagram 
\[
\xymatrix{
X\times \VV \ar[d]^{\pi_2} \ar[r]^{1\times f} & X\times{S(d)}  \ar[d]^{p_2}\\
\VV \ar[r]^f & S(d), }
\]
where $p_2: X\times S(d)\to S(d)$ is the usual projection.
Being $\TT\FF_d$ globally generated, as a subsheaf of $\TT X_{S(d)}$ it must coincide with the image of the restriction of sections $p_2^* p_{2,*}\TT\FF_d\to \TT X_{S(d)}$. Since $\g$ is maximal, by Remark \ref{secciones} we can restrict ourselves to a neighborhood $\wt{\UU}\subseteq S(d)$ of $\g$ and redefine $\VV=f^{-1}(\wt{\UU})$ in order to guarantee the equality
\[ p_{2,*}(\TT\FF_{\wt{\UU}}\hookrightarrow \TT X_{\wt{\UU}})=\GG_{\wt{\UU}}\hookrightarrow L_{\wt{\UU}}, \]
where $\GG_{\wt{\UU}}=\GG_{S(d)}\vert_{\wt{\UU}}$. But then the tangent sheaf of the family $f^*(\FF_d\vert_{\wt{\UU}})$ is
\begin{align*} \im \left((1\times f)^*(p_2^* p_{2,*}(\TT\FF_d\vert_{\wt{\UU}})\to \TT X_{\wt{\UU}})\right)& =  \im\left( (f \circ \pi_2)^* p_{2,*}\TT\FF_{\wt{\UU}} \to \TT X_\VV \right)  \\
&= \im\left( (f \circ \pi_2)^* \GG_{\wt{\UU}} \to \TT X_\VV \right) \\
&= \im \left( \pi_2^*\pi_{2,*}\TT\FF\vert_\VV \to \TT X_\VV\right).
\end{align*}
Being $\TT\FF\vert_\VV$ also globally generated, we have 
\[ \im ( \pi_2^*\pi_{2,*}\TT\FF\vert_\VV \to \TT X_\VV)=\TT\FF\vert_\VV. \]
This is, the pullback of $\FF_d\vert_{\wt{\UU}}$ is 
$$0 \rightarrow \TT\FF\vert_\VV \rightarrow \TT X_\VV \rightarrow (1\times f)^*N(d)\rightarrow 0  $$
and therefore $f^*N(d)\simeq N_\FF$ is flat over $\VV$. Then, $f$ factorizes over the flattening stratification
$$ \VV\rightarrow \coprod_k S(d)_k \rightarrow S(d) $$
and therefore  (shrinking $\VV$ if necessary) over the stratum $S(d)_i$ passing through $\g$. Let us also observe that we have proven the equality 
\[ f^*(\FF_d\vert_{\wt{\UU}})=\FF_{\overline{\inv}}\vert_\VV. \]

Let $\UU=\phi_i^{-1}(\VV)\subseteq S(d)_i$. The composition $\phi_i \circ f:\VV\to \inv$ satisfies $(f\circ \phi_i)^*\FF\vert_\VV=\FF\vert_\VV$. It follows from the representability of $\overline{\INV}$ that this composition coincides with the inclusion of $\VV$ in $\ov{\inv}$. Analogously, the composition $f\circ \phi_i: \UU\to S(d)_i$ coincides with the inclusion of $\UU$ in $S(d)_i$ and therefore  $\phi_i:\UU\to\VV$ is an isomorphism.
\end{dem}

As a consequence, we can state a version of [Corollary 6.4, \cite{CP}] for more general varieties and actions admitting positive-dimensional generic stabilizers.

\begin{corollary} Let $\g\subseteq L$ be a maximal semisimple subalgebra with $h^1(X,\TT\FF(\g))=0$. Then $\FF(\g)$ is rigid in \emph{$\overline{\inv}$}.
\end{corollary}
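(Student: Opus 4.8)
The plan is to deduce this directly from Theorem \ref{isolocal}, so the only work is to check that a maximal semisimple subalgebra is automatically rigid in $S(d)$. The two hypotheses needed to invoke Theorem \ref{isolocal} — that $\g$ be maximal of some dimension $d$ and that $h^1(X,\TT\FF(\g))=0$ — are both granted in the statement, so it suffices to verify the rigidity of $\g$ as a subalgebra of $L$ and then quote the ``in particular'' clause of that theorem.

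First I would invoke Whitehead's lemma: since $\g$ is semisimple, $H^1(\g,M)=0$ for every finite-dimensional $\g$-module $M$. Applying this to $M=L/\g$ gives $Z^1(\g,L/\g)=B^1(\g,L/\g)$. By Proposition \ref{tangentelie}, the left-hand side is the tangent space to $S(d)$ at $\g$, while the right-hand side is the tangent space to the orbit $\Aut(X)\cdot\g$ at $\g$. Hence these two tangent spaces coincide; since the orbit is a smooth locally closed subscheme of $S(d)$, equality of tangent spaces at $\g$ forces $\dim(\Aut(X)\cdot\g)=\dim_\g S(d)=\dim Z^1(\g,L/\g)$, so $S(d)$ is smooth at $\g$ and the orbit contains a Zariski-open neighborhood of $\g$. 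This is exactly rigidity of $\g$ in $S(d)$, and it is already recorded in the Example following Proposition \ref{tangentelie}.

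Next I would transport this rigidity to the stratum on which $\phi$ is defined. As observed in the construction of the $\phi_i$, the stratification $\coprod_i S(d)_i\to S(d)$ is $\Aut(X)$-stable and the orbit $\Aut(X)\cdot\g$ is contained in the single stratum $S(d)_i$ passing through $\g$. Therefore the open set witnessing rigidity meets $S(d)_i$ in a non-empty set (containing $\g$) which is open in $S(d)_i$ and still contained in the orbit; thus $\g$ is rigid inside $S(d)_i$ as well.

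Finally, apply Theorem \ref{isolocal}. Its closing assertion states precisely that if $\g$ is rigid then $\FF(\g)$ is rigid in $\overline{\inv}$: the local isomorphism $\phi:\UU\to\VV$ is $\Aut(X)$-equivariant and carries the orbit of $\g$ onto the orbit of $\FF(\g)$, so an open neighborhood of $\FF(\g)$ in $\overline{\inv}$ is covered by $\Aut(X)\cdot\FF(\g)$. I do not anticipate any genuine obstacle; the only point that deserves a moment's care is the equivariance of the stratification, which is what guarantees that rigidity in $S(d)$ descends to rigidity in the stratum $S(d)_i$ where $\phi$ lives.
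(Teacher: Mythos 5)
Your proposal is correct and follows the same route as the paper: the paper's proof simply notes that semisimplicity makes the orbit of $\g$ open in $S(d)$ (via Whitehead's lemma and Proposition \ref{tangentelie}, as recorded in the example following that proposition) and then invokes the ``in particular'' clause of Theorem \ref{isolocal}. You have merely spelled out the Whitehead/tangent-space computation and the equivariance of the stratification in more detail than the paper does.
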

\begin{dem} Let $S(d)_i$ be the stratum passing through $\g$ in $S(d)$. Being $\g$ semisimple, its orbit is open. By Theorem  \ref{isolocal} it follows that $\FF(\g)$ is rigid in $\overline{\inv}$. 
\end{dem}

For arithmetically Cohen-Macaulay subvarieties $X\subseteq \PP^r$, we also state the above taking into consideration the (graded) homological dimension of $\TT\FF(\g)$.

\begin{corollary} \label{resFol} Let $X\subseteq \PP^r$ be smooth arithmetically Cohen-Macaulay of dimension $n$ and $\g\subseteq L$ a Lie subalgebra such that the morphism of sheaves $\g\otimes_\CC\OO_{X}\to\TT\FF(\g)$ admits a resolution of the form 
\[0 \rightarrow \bigoplus_{i=1}^{r_{n-2}}\OO_{X}(e^{n-2}_i)\rightarrow \cdots\rightarrow  \bigoplus_{i=1}^{r_{1}}\OO_{X}(e^{1}_i) \rightarrow \g\otimes_\CC\OO_{X}\rightarrow \TT\FF(\g)\rightarrow 0. \]
Then $\phi:\coprod_i S(d)_i\to \overline{\emph{\inv}}$ is an isomorphism locally around $\g$ and $\FF(\g)$. In particular, if $\g\subseteq L$ is rigid then $\FF(\g)$ is rigid in \emph{$\overline{\inv}$}. 
\end{corollary}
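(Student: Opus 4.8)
The plan is to reduce this statement directly to Theorem \ref{isolocal}, whose two hypotheses—maximality of $\g$ and the vanishing $h^1(X,\TT\FF(\g))=0$—I will verify are automatically forced by the homological assumption. The tool that supplies both at once is Lemma \ref{resol}, which is tailored to exactly a resolution of this shape.

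First I would note that the morphism $\g\otimes_\CC\OO_X\to \TT\FF(\g)$ is surjective by construction, since $\TT\FF(\g)$ is defined to be the image of $\rho_\g$. I can therefore apply Lemma \ref{resol} with $V=\g$ and $\FF=\TT\FF(\g)$: the hypothesis of that lemma is precisely the existence of the length $n-3$ resolution we are given. Its conclusion delivers two facts simultaneously, namely $h^1(X,\TT\FF(\g))=0$ and the surjectivity of the induced map on global sections $\g\to H^0(X,\TT\FF(\g))$.

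The key observation—and the point where one must be careful not to discard half of the output of Lemma \ref{resol}—is that the second fact yields maximality. Indeed, the inclusion $\g\subseteq L=H^0(X,\TT X)$ factors through $H^0(X,\TT\FF(\g))$, because each element of $\g$ is a global section of the subsheaf $\TT\FF(\g)\subseteq \TT X$; this makes $\g\to H^0(X,\TT\FF(\g))$ injective. Being also surjective, it is an isomorphism, so $\g=H^0(X,\TT\FF(\g))$, which is exactly the condition that $\g$ be maximal (cf. Remark \ref{secciones}).

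With both hypotheses in hand, I would simply invoke Theorem \ref{isolocal} to conclude that $\phi$ restricts to an isomorphism between neighborhoods of $\g$ and $\FF(\g)$, and that rigidity of $\g$ in $S(d)$ transfers along this $\Aut(X)$-equivariant local isomorphism to rigidity of $\FF(\g)$ in $\ov{\inv}$. There is no genuine obstacle here: the analytic content has already been absorbed into Lemma \ref{resol} (through Lemma \ref{h1} and the arithmetically Cohen-Macaulay vanishing $H^i(X,\OO_X(d))=0$ for $1\le i\le n-1$) and into Theorem \ref{isolocal}. The only subtlety worth flagging is that the resolution is assumed for the morphism $\g\otimes_\CC\OO_X\to\TT\FF(\g)$ rather than for $\TT\FF(\g)$ as an abstract sheaf—and this is exactly why Lemma \ref{resol}, rather than Lemma \ref{h1}, is the right input, since Lemma \ref{h1} alone would give $h^1=0$ but not the surjectivity of sections needed to establish maximality.
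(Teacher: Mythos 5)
Your proposal is correct and follows exactly the paper's route: the paper's own proof is the one-line "It follows directly from Lemma \ref{resol} and Theorem \ref{isolocal}," and you have simply made explicit the two facts that Lemma \ref{resol} supplies (the vanishing $h^1(X,\TT\FF(\g))=0$ and, via surjectivity on global sections together with the evident injectivity, the maximality of $\g$) so that Theorem \ref{isolocal} applies. Your added remark that the surjectivity half of Lemma \ref{resol} is what yields maximality is a useful clarification of a step the paper leaves implicit.
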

\begin{dem} It follows directly from Lemma \ref{resol} and Theorem \ref{isolocal}.
\end{dem}

\begin{ej} \label{invjrigido} \emph{The $j$-invariant in $\PP Sym^4(\CC^2)$ and the exceptional foliation}. The infinitesimal action of $\Sl_2$ on $\PP Sym^4(\CC^2)$ is locally free outisde a set of codimension 2. In particular, the morphism $\Sl_2\otimes_\CC \OO_{\PP^4}\to \TT\FF(\Sl_2)$ is an isomorphism. Since this is a semisimple Lie algebra, by the Theorem above the element $\FF(\Sl_2)$ is rigid in $\inv^1$. 

Since $\aff(\CC)\subseteq \Sl_4$ is also rigid, the same holds for the exceptional foliation $\FF(\g)$ in $\PP^3$ for $\g=\aff(\CC)$. This is, the foliation induced by the restriction of $\FF(\Sl_2)$ to the properly embedded $\PP^3\subseteq \PP Sym^4(\CC^2)$ is rigid in the corresponding moduli space $\inv^1$ of foliations in $\PP^3$.
\end{ej}

\begin{ej} \label{ejcuadrica} \emph{An action of $\aff(\CC)$ on the smooth quadric $Q\subseteq \PP^4$}. 
In \cite{fano3folds} the authors classify the irreducible components of $\FF^q(X,\omega_X)$ for Fano $3$-folds with $Pic(X)\simeq \ZZ$. In the case $X$ is the smooth quadric in $\PP^4$, this space has 3 irreducible components and only one of them consists generically of $L$-foliations, which are associated to a rigid action of $\aff(\CC)$. We will now see that we can recover its rigidity in $\inv$ as an application of the above results.

Let us identify the space $\PP Sym^4(\CC^2)$ with the set of effective divisors in $\PP^1$. The closure of the point $q=1+(-1)+i+(-i) $ under the action of $\Sl_2$ in the example above is a smooth quadric $Q$. Let us consider the foliation induced by the restriction of this action to the affine Lie algebra $\aff(\CC)$ fixing the point $\infty$. Since its orbits are of dimension $2$ outside the set $\{ p+3\infty\} \cup \{ 3p+\infty\}$ we have
\[ \TT\FF(\aff(\CC))\simeq \aff(\CC)\otimes_\CC\OO_Q .\]
In particular, we are under the hypotheses of Corollary \ref{resFol}. Recall that in this case we have $L=\mathfrak{so}_5$. Using the GAP software we can easily compute the dimensions corresponding to the inclusion $\aff(\CC)\subseteq \Sl_2\subseteq\so_5$:
\[ \dim Z^1(\aff(\CC),\so_5/\aff(\CC))=\dim B^1 (\aff(\CC),\so_5/\aff(\CC))=8. \]
It follows that this is a rigid subalgebra and therefore the foliation $\FF(\aff(\CC))$ is rigid in the space $\inv^1$ of foliations on $Q$.
\end{ej}

\begin{ej} \label{adjsln} \emph{The adjoint representation of $\Sl_n$. }
Let $\g=\Sl_n\curvearrowright V$ be the adjoint representation. Let us consider the foliation $\FF(\g)$ associated to the corresponding action on $\PP V$. Its orbits coincide with the conjugacy classes of $sl_n$ under the action of $SL_n$. Its algebra of invariants is the polynomial algebra on the $n-2$ coefficients of the characteristic polynomial $\chi$. This is, the generic leaf of this foliation coincides with the generic fiber of the map
$$ \chi: \PP V \dashrightarrow \PP^{n-3}_{(2,\dots,n)}.$$
The inclusion $\Sl_n\hookrightarrow H^0(\PP V, \TT\PP V)$ can be interpreted as follows: for every element $\X\in \Sl_n$, the evaluation at a point $[p]\in \PP V$ is   
\[ \X([p])=\ov{[\X,p]}\in V/\langle p \rangle\simeq\TT_{[p]}\PP V.\]
With this in mind, we can compute the kernel of the morphism 
$$\rho: \Sl_n\otimes \OO_{\PP V}\to \TT \FF(\g).$$
Let us consider the Zariski open sets of $\PP V$ 
\[ \UU=\{ [p]\in \PP V \hspace{0.2cm}\vert\hspace{0.2cm} \{ Id, p,\dots, p^{n-1}\} \mbox{ is linearly independent}\}. \]
and $ \WW=\{ [p]\in \PP V \hspace{0.2cm}\vert\hspace{0.2cm} p^n\neq0 \}$. It is quite clear that $\codim(\PP V\setminus \WW)\geq 2$. Let us see that the same holds for $\UU$. First, observe that the points lying in $\UU$ are the classes of matrices whose minimal and characteristic polynomials coincide, or equivalently whose Jordan canonical forms have a unique Jordan block for each of its distinct eigenvalues. In particular, $\PP V\setminus \UU$ is contained in the discriminant divisor $D$ consisting of (classes of) elements with less than $n$ distinct eigenvalues. We claim that $\UU\cap D$ is dense in $D$. Indeed, for every $[p]\in D$ there exists an element $v\in V$ such that $[p+t v]\in D\cap \UU$ for every  $t\in \CC^*$: if $p$ has Jordan canonical form 
$$p=c^{-1} (d+u) c$$ for an invertible element $c\in V$ and $d$ and $u$ diagonal and strictly upper-triangular matrices respectively, then straightforward calculation shows that we can take $v=c^{-1}(s-u) c$ where $s=(s_{ij})$ satisfies $s_{ij}=1$ if $j=i+1$ and $s_{ij}=0$ otherwise. This shows that $\PP V\setminus \UU$ has positive codimension in $D$ and therefore $\codim(\PP V\setminus \UU)\geq 2$.

We will now compute the kernel of the restriction $\rho\vert_{\UU\cap \WW}$.
For a point $[p]\in \WW$ and an element $\X\in \Sl_n$, we have $\X([p])=0$ if and only if $[\X,p]=0$. Indeed, if this were not the case then there would exist an element $A\in \Sl_n$ such that  
\[ [A,p]=p.\]
By induction one can see that this easily implies that $[A,p^k]=n_k p^k$ for some $\{ n_k\}_{k\in\NN}\subseteq \NN$. But then $tr(p^k)=0$ for every $k\in \NN$ and therefore $p$ must be a nilpotent element, which leads to a contradiction. 
This means that the fiber $\ker(\rho)([p])$ over some point $[p]\in \WW$ is the centralizer of $p$.

For every $1\leq k \leq n-1$ consider the matrix $P_k\in \CC[V]_k^{n\times n}$ such that for every $p\in V$ we have
 \[ P_k(p)=p^k-tr(p^k)Id .\]
Each of these elements defines a section $Z_k\in H^0( \PP V, \Sl_n\otimes \OO_{\PP V}(k))$ that trivially satisfies $Z_k\in\ker(\rho)$. Observe that these sections are linearly independent around every $[p]\in \UU$. Moreover, it follows from [Theorem 3.2.4.2 \cite{HJ}] that these also generate $\ker(\rho)$ around every $[p]\in \UU\cap \WW$.

We have now constructed a complex 
\[ 0\rightarrow \bigoplus_{k=1}^{n-1} \OO_{\PP V}(-k) \to \Sl_n\otimes \OO_{\PP V}\rightarrow \TT \FF(\g) \rightarrow 0 \]
that is exact on $\UU\cap \WW$. Since $\codim(\PP V\setminus (\UU\cap \WW))\geq 2$, this means that this is in fact a resolution of $\TT \FF(\g)$. 
Being $\Sl_n$ semisimple, if $n\geq 3$ we can apply Corollary \ref{resFol} in order to conclude that the foliation $\FF(\g)$ is rigid in $\inv$.

\end{ej}

\begin{ej} \label{codigoM2}Let us consider the foliation of codimension $2$ on $\PP^5$ induced by the vector fields
\begin{align*} \X_1&= (x_1+x_2)\frac{\partial}{\partial x_1} + x_2 \frac{\partial}{\partial x_2}+ (x_4+x_5)\frac{\partial}{\partial x_4} + x_5 \frac{\partial}{\partial x_5}, \\
\X_2 &=-x_0\frac{\partial}{\partial x_1} -x_3\frac{\partial}{\partial x_4}\mbox{\hspace{0.2cm} and} \\
\X_3 &= - x_0 \frac{\partial}{\partial x_1}-x_0 \frac{\partial}{\partial x_2} -  x_3 \frac{\partial}{\partial x_4}-x_3 \frac{\partial}{\partial x_5}
\end{align*}
(this is, the foliation induced by the subalgebra $\g\subseteq L$ of Example \ref{calcfamilia1} and Example \ref{familia1c}). Since  $\g\otimes_{\CC}\OO_{\PP^5}\to\TT\FF(\g)$ is an isomorphism, this subalgebra satisfies the hypotheses of Corollary \ref{resFol}. Recall from Example \ref{calcfamilia1} that the space tangent to $S(3)$ at the point $\g$ is of dimension $32$ and that the dimension of the tangent space of its orbit under the action of $\Aut(\PP^5)$ is $28$. Applying Corollary \ref{resFol}, we can conclude that every irreducible component of $\inv$ passing through $\g$ is of dimension less or equal than $32$ (and greater or equal than $28$). 
\end{ej}

In the case of foliations of codimension $1$ we can translate the above results to the spaces $\FF^1(X,\LL)$, provided that $Pic(X)$ is discrete: 

\begin{teo} \label{teoformas} Suppose $Pic(X)$ is discrete and let $\g\subseteq L$ be a maximal subalgebra acting with orbits of dimension $\dim(X)-1$ in codimension $1$ and such that $h^1(X,\TT\FF(\g))=0$. If $Sing(\FF(\g))$ is without embedded components and   $Sing(\FF(\g))=\ov{K(\FF(\g))}\cup\{p_1,\dots,p_r\}$ where the $p_i$'s are Reeb-type singularities, then there exist neighborhoods $\UU\subseteq \coprod_i S(d)_i$ and \emph{$\VV\subseteq \FF^1(X,N_{\FF(\g)}^{\vee\vee})$} of $\g$ and $\FF(\g)$ respectively such that the morphism $\omega:\UU\to\VV$ defined by $\g\mapsto [\omega(\g)]$ is an isomorphism. In particular, if $\g$ is a rigid subalgebra then $\FF(\g)$ is rigid in $\FF^1(X,N_{\FF(\g)}^{\vee\vee})$.
\end{teo}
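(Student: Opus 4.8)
The strategy is to factor the desired isomorphism $\omega:\UU\to\VV$ through the space $\overline{\inv}$ by composing the isomorphism already established in Theorem \ref{isolocal} with the comparison map of Proposition \ref{compararinv}. Concretely, set $\LL=N_{\FF(\g)}^{\vee\vee}=\det(N_{\FF(\g)})$ (using that $\FF(\g)$ has codimension $1$) and recall that Proposition \ref{compararinv} provides, when $Pic(X)$ is discrete, an $\Aut(X)$-equivariant isomorphism $\omega:\UU'\to\C$ between a neighborhood $\UU'\subseteq\inv$ of $\FF(\g)$ and the relevant stratum $\C\subseteq\FF^1(X,\LL)$ of the flattening stratification, sending a foliation $\FF'$ to the class $[\omega_{\FF'}]$ of its defining twisted $1$-form. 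Theorem \ref{isolocal} already gives an isomorphism $\phi:\UU\to\VV'$ between neighborhoods in $\coprod_i S(d)_i$ and in $\overline{\inv}$. Thus the first step is to compose these two maps to obtain $\omega\circ\phi:\UU\to\C$, which is the map $\g\mapsto[\omega(\g)]$ up to shrinking neighborhoods.

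The key point that must be addressed is that Theorem \ref{isolocal} lands in $\overline{\inv}$, whereas Proposition \ref{compararinv} takes as input a point of $\inv$ (foliations with \emph{torsion-free} normal sheaf). So the heart of the proof is to verify that the image $\FF(\g)\in\overline{\inv}$ actually lies in the open subscheme $\inv\subseteq\overline{\inv}$, i.e.\ that $N_{\FF(\g)}$ is torsion-free, and moreover that a whole neighborhood $\VV$ of $\FF(\g)$ in $\overline{\inv}$ is contained in $\inv$. This is precisely where the hypotheses on the singular scheme enter. The plan is to use the assumption that $Sing(\FF(\g))$ has no embedded components together with the description $Sing(\FF(\g))=\overline{K(\FF(\g))}\cup\{p_1,\dots,p_r\}$, with the $p_i$ of Reeb type, to control the codimension of the singular locus and rule out torsion in $N_\FF$ after small deformation. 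The Kupka locus $\overline{K(\FF(\g))}$ is pure of codimension $2$ (as recorded in the Remark following the definition of Kupka singularities) and is stable under deformation, while the Reeb-type points are isolated; the absence of embedded components guarantees that the normal sheaf stays torsion-free in the family, so that $\phi$ restricted to a suitable $\VV$ factors through $\inv$.

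Once it is known that $\phi$ maps a neighborhood $\UU$ isomorphically onto a neighborhood $\VV'\subseteq\inv\subseteq\overline{\inv}$, the remainder is formal: apply Proposition \ref{compararinv} to transport $\VV'$ isomorphically onto the stratum $\C\subseteq\FF^1(X,\LL)$, set $\VV$ to be the image, and observe that the composite is exactly $\g\mapsto[\omega(\g)]$ since $\phi(\g)=\FF(\g)$ and $\omega(\FF(\g))=[\omega_{\FF(\g)}]=[\omega(\g)]$. Equivariance of both $\phi$ (noted after its definition) and $\omega$ (stated in Proposition \ref{compararinv}) gives equivariance of the composite, from which the final rigidity assertion follows immediately: if $\g$ is rigid then its orbit is Zariski open in $\coprod_i S(d)_i$, hence the orbit of $\FF(\g)$ is open in $\FF^1(X,\LL)$. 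I expect the main obstacle to be the second step, namely showing that the torsion-freeness of the normal sheaf is an open condition that is satisfied on all of $\VV$ under the stated singularity hypotheses; this is the only place where the geometric input about Kupka and Reeb singularities is genuinely used, and it is what distinguishes $\inv$ from $\overline{\inv}$.
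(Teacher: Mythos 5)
Your overall architecture (compose the isomorphism of Theorem \ref{isolocal} with the comparison map of Proposition \ref{compararinv}) is the same as the paper's, but you have misidentified where the singularity hypotheses are actually needed, and as a result your argument has a genuine gap. Proposition \ref{compararinv} only produces an isomorphism from a neighborhood of $\FF(\g)$ in $\inv$ onto (an open subset of) the \emph{stratum} $\C$ of the flattening stratification of $N_{\FF^1_\LL}$ passing through $[\omega(\g)]$, and $\C$ is a priori only a locally closed subscheme of $\FF^1(X,\LL)$. Your plan stops there: ``set $\VV$ to be the image'' yields an open subset of $\C$, not a neighborhood $\VV\subseteq \FF^1(X,\LL)$ as the statement requires, and the final rigidity claim (that the orbit of $\FF(\g)$ is open in $\FF^1(X,\LL)$, not merely in $\C$) does not follow. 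The missing step --- and the actual purpose of the hypotheses that $Sing(\FF(\g))$ has no embedded components and equals $\ov{K(\FF(\g))}$ together with finitely many Reeb-type points --- is to show that the stratum $\C$ is \emph{open} in $\FF^1(X,\LL)$ around $[\omega(\g)]$. The paper does this by invoking Theorem 8.13 of \cite{Quall} to get flatness of the singular scheme of the tautological family $\FF^1_\LL$ near $[\omega(\g)]$, and then Theorem 7.8 of \cite{Quall} to deduce flatness of $N_{\FF^1_\LL}$ there; openness of the stratum is then immediate.

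By contrast, the issue you single out as ``the heart of the proof'' --- that Theorem \ref{isolocal} lands in $\overline{\inv}$ while Proposition \ref{compararinv} lives on $\inv$ --- is comparatively harmless and does not require the Kupka/Reeb hypotheses at all: $\FF(\g)$ is a genuine foliation (its normal sheaf is torsion-free), and torsion-freeness of the fibers is an open condition in a flat family, so a neighborhood of $\FF(\g)$ in $\overline{\inv}$ already lies in $\inv$. Your heuristic that ``the Kupka locus is stable under deformation'' and that absence of embedded components ``rules out torsion after small deformation'' is pointing at the right geometric phenomenon, but as written it proves the wrong statement; to close the gap you need the flatness of $Sing(\FF^1_\LL)$ and hence of $N_{\FF^1_\LL}$ over a genuine open neighborhood of $[\omega(\g)]$ in $\FF^1(X,\LL)$.
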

\begin{dem} Let $\g\subseteq L$ be a subalgebra satisfying the hypotheses of the Theorem and $\LL=\det(N_{\FF(\g)})=N_{\FF(\g)}^{\vee\vee}$. Let $\FF^1_\LL$ be the tautological family of foliations with base $\FF^1(X,\LL)$ constructed in Remark \ref{familiataut}. The hypothesis on $Sing(\FF(\g))$ allow us to apply [Theorem 8.13, \cite{Quall}] in order to ensure that the singular scheme $Sing(\FF^1_\LL)$ of the family is flat in a neighborhood of $[\omega(\g)]\in \FF^1(X,\LL)$. Then, by [Theorem 7.8, \cite{Quall}] the normal sheaf  $N_{\FF^1_\LL}$ is also flat in a neighborhood of $[\omega(\g)]$. This is, the stratum of the correponding flattening stratification passing through $[\omega(\g)]$ is open in $\FF^1(X,\LL)$. Using Theorem \ref{isolocal} and Proposition \ref{compararinv} we can conclude that the composition 
\[ \omega: \coprod_i S(d)_i \rightarrow \inv^1\rightarrow \FF^1(X,\LL)\] 
is an isomorphism locally around $\g$ and $\FF(\g)$.
\end{dem}
\begin{corollary}\label{corformas} Let $X$ be arithmetically Cohen-Macaulay of dimension $n$ and $\g\subseteq L$ a subalgebra acting with orbits of dimension $n-1$ in codimension $1$. Suppose further that the morphism $\g\otimes_\CC \OO_{X}\to \TT\FF(\g)$ admits a resolution of length $n-2$ of the form
\[0 \rightarrow \bigoplus_{i=1}^{r_{n-2}}\OO_{X}(e^{n-2}_i)\rightarrow \cdots\rightarrow  \bigoplus_{i=1}^{r_{1}}\OO_{X}(e^{1}_i) \rightarrow \g\otimes_\CC\OO_{X}\rightarrow \TT\FF(\g)\rightarrow 0. \]
If $Sing(\FF(\g))$ is without embedded components and  $Sing(\FF(\g))=\ov{K(\FF(\g))}\cup\{p_1,\dots,p_r\}$ where the $p_i$'s are Reeb singularities, then there exist neighborhoods $\UU\subseteq \coprod_i S(d)_i$ and \emph{$\VV\subseteq \FF^1(X,N_{\FF(\g)}^{\vee\vee})$} of $\g$ and $\FF(\g)$ respectively such that the morphism $\omega:\UU\to\VV$ defined by $\g\mapsto \omega(\g)$ is an isomorphism. In particular, if $\g$ is a rigid subalgebra then $\FF(\g)$ is rigid in $\FF^1(X,N_{\FF(\g)}^{\vee\vee})$.
\end{corollary}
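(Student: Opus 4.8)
The plan is to recognize Corollary \ref{corformas} as the arithmetically Cohen-Macaulay specialization of Theorem \ref{teoformas}: the role of the resolution hypothesis is precisely to furnish the two inputs of that theorem which are not stated here as standing assumptions, namely the cohomological vanishing $h^1(X,\TT\FF(\g))=0$ and the maximality of $\g$. Every remaining hypothesis — that $\g$ acts with orbits of dimension $n-1$ in codimension $1$, that $Sing(\FF(\g))$ has no embedded components, and that $Sing(\FF(\g))=\ov{K(\FF(\g))}\cup\{p_1,\dots,p_r\}$ with the $p_i$ of Reeb type — appears verbatim in both statements and is simply carried over.

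First I would apply Lemma \ref{resol} with $V=\g$ and the coherent sheaf taken to be $\TT\FF(\g)$. The given length-$(n-3)$ resolution is exactly of the shape required by that lemma, so it outputs both $h^1(X,\TT\FF(\g))=0$ and the surjectivity of the induced map $\g\to H^0(X,\TT\FF(\g))$. Because the global vector fields coming from the action are tangent to $\FF(\g)$, one always has an inclusion $\g\hookrightarrow H^0(X,\TT\FF(\g))$; combined with the surjectivity just obtained, this forces $\g=H^0(X,\TT\FF(\g))$, i.e. $\g$ is maximal in the sense of the paper. Thus the two hypotheses of Theorem \ref{teoformas} missing from the present statement are secured.

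It then remains to supply the ambient hypothesis of Theorem \ref{teoformas} that $Pic(X)$ be discrete. Here I would invoke that $X$ is smooth, arithmetically Cohen-Macaulay and — since a resolution of length $n-3$ only makes sense for $n\geq 3$ — of dimension at least $3$, so that a Grothendieck-Lefschetz type argument identifies $Pic(X)$ with a finitely generated free group, in particular a discrete one. With all hypotheses of Theorem \ref{teoformas} now in force, I would apply it directly to produce the neighborhoods $\UU$ and $\VV$ and the isomorphism $\omega:\UU\to\VV$, and the rigidity assertion for a rigid $\g$ follows exactly as in that theorem.

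The step deserving the most care is the passage from arithmetic Cohen-Macaulayness to the discreteness of $Pic(X)$: this condition is genuinely weaker than being a complete intersection, so the Lefschetz-type input must be justified from the embedding and the dimension rather than assumed outright, or else added as an explicit hypothesis. The extraction of maximality from Lemma \ref{resol} is the other point I would make explicit rather than leave implicit, since it is exactly what lets us dispense with the separate maximality assumption present in Theorem \ref{teoformas}.
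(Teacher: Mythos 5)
Your proposal is correct and follows exactly the paper's route: the proof given there is the one-line ``It follows from the Theorem above and Lemma \ref{resol}'', i.e.\ Lemma \ref{resol} supplies $h^1(X,\TT\FF(\g))=0$ and the surjectivity of $\g\to H^0(X,\TT\FF(\g))$ (hence maximality, since the reverse inclusion is automatic), after which Theorem \ref{teoformas} applies verbatim. The one point you flag as delicate, the discreteness of $Pic(X)$, needs no Grothendieck--Lefschetz input: an arithmetically Cohen--Macaulay $X$ of dimension $n\geq 2$ has $H^1(X,\OO_X)=0$ (this vanishing is already recorded in the proof of Lemma \ref{h1}), so $Pic^0(X)$ is trivial and $Pic(X)$ is the N\'eron--Severi group, which is discrete.
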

\begin{dem} It follows from the Theorem above and Lemma \ref{resol}.
\end{dem}

As was already mentioned, the deformation theory of $\g$ and $\FF(\g)$ are governed by the Chevalley-Eilenberg complex $\C^\bullet(\g,L/\g)$ and the leaf complex $\LL_{\FF(\g)}$ respectively.
We will end this section by giving a first order intepretation of Theorem \ref{isolocal} in terms of these complexes. We will denote the scheme of dual numbers by $D=Spec(\CC[\varepsilon]/(\varepsilon^2))$.

Let $\g\subseteq L$ be a maximal subalgebra such that $h^1(X,\TT\FF(\g))=0$ and $S(d)_i$ the stratum passing through $\g$. The derivative of $\phi$ at a point $\g$ is the map
\begin{align*}d_\g\phi: \TT_\g S(d)_i\subseteq \SUB^L_d(Spec(D)) &\to \INV(Spec(D))=\TT_{\FF(\g)} \inv\\
(\GG\subseteq L_D)&\mapsto \FF(\GG),
\end{align*}
where $\TT_\g S(d)_i$ is the set of families of subalgebras $\GG\subseteq L_D$ over $D$ such that $\GG(0)=\g$ and whose  normal sheaf $N_{\FF(\GG)}$ is flat over $D$. By Remark \ref{secciones} every such family satisfies $\pi_{2,*}\TT\FF(\GG)=\GG$. In particular this implies that $d_\g\phi$ is injective. Just as in the proof of Theorem \ref{isolocal}, we can use our hypotheses in order to construct a section for this map (but this time in terms of the corresponding complexes). By Proposition \ref{tangentelie} and Theorem \ref{tangente} we can realize $d_\g \phi$ as a map
\[ V_i\subseteq Z^1(\g,L/\g)\to \ker(H^0(d_1)) \]
where $V$ is the subspace corresponding to $\TT_\g S(d)_i\subseteq \TT_\g S(d)$ and $d_1$ is the differential of $\LL_\FF$. 
Observe that the hypotesis on the first cohomology group of $\TT\FF(\g)$ implies that the cohomology of $\FF(\g)$ is of the form
\[ 0\rightarrow \g \rightarrow L \rightarrow H^0(X,N_{\FF(\g)})\rightarrow 0\rightarrow \cdots \]
and therefore $H^0(X,N_{\FF(\g)})\simeq_{\g-mod} L/\g$. 

On the other hand, the exterior powers of the morphism $\g\otimes\OO_X\to \TT\FF(\g)$ are epimorphisms 
\[ f^\bullet: \bigwedge^\bullet \g \otimes_\CC \OO_{X} \rightarrow \bigwedge^\bullet \TT\FF(\g) .\]
In order to simplify the notation we will use the symbol $f$ for any of these maps. Their duals 
\[ f^*: \Hom(\bigwedge^\bullet \TT\FF(\g),N_{\FF(\g)})\to \Hom(\bigwedge^\bullet \g\otimes_\CC \OO_{X}, N_{\FF(\g)})\subseteq \Hom_\CC(\bigwedge^\bullet \g,L/\g)\]
induce a monomorphism of complexes $f^*: H^0(L_\FF)\to C(\g,L/\g)$
of the form
\[\begin{tikzcd}
 &0 \arrow[d] & 0 \arrow[d] & \\
 L\arrow[r]\arrow[d,"id" ] \arrow[d, equals] & \Hom( \TT\FF(\g),N_{\FF(\g)}) \arrow[r, "H^0(d)"]\arrow[d, "f^*"]& \Hom(\bigwedge^2 \TT\FF(\g),N_{\FF(\g)})\arrow[d, "f^*"]\arrow[r, "H^0(d)"] & \cdots \\
L \arrow[r] & \Hom_\CC( \g, L/\g)\arrow[r, "\delta"]& \Hom_\CC(\bigwedge^2 \g, L/\g) \arrow[r, "\delta"] & \cdots
 \end{tikzcd}\]
Being $f^*$ a morphism of complexes we must have $f^*\ker(H^0(d))\subseteq \ker (\delta)$. This is, $f^*$ is a transformation
\[f^*: \ker(H^0(d_1))\to Z^1(\g,L/\g). \]
Now let $\FF_D$ be a first order deformation of $\FF(\g)$ and $\eta\in \ker(H^0(d_1))$ the corresponding element according to Theorem \ref{tangente}. Its pullback $f^*\eta:\g\to L/\g$ defines the family of subalgebras of $L$ over $D$ given by 
\[ \GG=H^0(X_D,\TT\FF_D)\hookrightarrow L_D.\]
The differential of $\phi$ applied to this element is the family with tangent sheaf 
$$\im(\rho_\GG:\GG \otimes \OO_{X\times D}\to \TT X_D)\subseteq \TT\FF_D.$$ 
Being $\TT\FF(\g)$ globally generated, this inclusion is in fact an equality. We can then conclude that $f^*\eta$ lies in $V_i$ (this is, it is tangent to the stratum $S(d)_i$) and that $f^*$ is in fact a section of the differential of $\phi$ at the point $\g$, which is therefore bijective.

\begin{obs} The above discussion would actually prove that $\phi$ is an isomorphism locally around $\g$ and $\FF(\g)$, provided that $\g$ or $\FF(\g)$ are smooth points of the schemes $\coprod S(d)_i$ and $\overline{\inv}$.
\end{obs}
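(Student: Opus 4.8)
The plan is to build directly on the first-order computation carried out above, which establishes that the differential
\[ d_\g\phi\colon \TT_\g\Bigl(\coprod_i S(d)_i\Bigr)\to \TT_{\FF(\g)}\overline{\inv} \]
is an isomorphism: it is injective by Remark \ref{secciones}, and the morphism of complexes $f^*\colon \ker(H^0(d_1))\to Z^1(\g,L/\g)$ furnishes a linear section, whence bijectivity. Granting this, the assertion reduces to a purely local statement about the induced map on completed local rings $\phi^{\#}\colon B_Y:=\widehat{\OO}_{\overline{\inv},\FF(\g)}\to B_X:=\widehat{\OO}_{\coprod_i S(d)_i,\g}$. The hypothesis that $d_\g\phi$ is an isomorphism translates exactly into the statement that $\phi^{\#}$ induces an isomorphism on cotangent spaces $\m_Y/\m_Y^2\xrightarrow{\sim}\m_X/\m_X^2$, and by the complete form of Nakayama's lemma this already forces $\phi^{\#}$ to be surjective.

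When $\g$ is a smooth point of $\coprod_i S(d)_i$ the argument closes cleanly. Here $B_X\cong\CC[[t_1,\dots,t_n]]$ with $n=\dim\TT_\g(\coprod_i S(d)_i)=\dim\TT_{\FF(\g)}\overline{\inv}$; choosing a minimal system of generators of $\m_Y$ yields a surjection $\CC[[s_1,\dots,s_n]]\twoheadrightarrow B_Y$, and I would observe that the composite $\CC[[s]]\to B_Y\xrightarrow{\phi^{\#}}\CC[[t]]$ is an isomorphism on degree-one parts. By the formal inverse function theorem (complete Nakayama together with the fact that a surjection between regular local rings of equal dimension has trivial kernel) this composite is an isomorphism of power series rings. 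Because the composite is injective while $\CC[[s]]\twoheadrightarrow B_Y$ is surjective, $\phi^{\#}$ itself must be injective; being also surjective, it is an isomorphism. Thus $\phi$ is a local isomorphism, and $\overline{\inv}$ is automatically smooth at $\FF(\g)$ as well.

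The main obstacle is the remaining case, in which only the target $\overline{\inv}$ is assumed smooth at $\FF(\g)$. Then $B_Y\cong\CC[[s_1,\dots,s_n]]$ and $\phi^{\#}$ is a surjection onto $B_X\cong\CC[[s]]/I$ with $I\subseteq\m^2$, but surjectivity together with an isomorphism on cotangent spaces does not by itself exclude nilpotents in $B_X$ (the closed immersion of a fat point into a line is exactly such a map). To rule this out I would again invoke the running hypothesis $h^1(X,\TT\FF(\g))=0$: the construction in the proof of Theorem \ref{isolocal} produces an honest morphism $f$ — not merely its differential $f^*$ — with $\phi_i\circ f=\mathrm{id}$ near $\FF(\g)$, so that $\phi^{\#}$ admits a left inverse $f^{\#}\colon B_X\to B_Y$ and is therefore injective. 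Combined with the surjectivity obtained above, $\phi^{\#}$ is an isomorphism. Either way the first-order isomorphism is upgraded to the asserted local isomorphism of schemes around $\g$ and $\FF(\g)$.
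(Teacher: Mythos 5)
Your local-algebra skeleton is the right reading of the remark (the paper itself offers no proof beyond the phrase ``the above discussion'', i.e.\ the tangent-level computation with the section $f^*$), and your source-smooth branch is essentially complete \emph{at the level of completed local rings}: surjectivity of $\phi^{\#}$ from the cotangent isomorphism via complete Nakayama, injectivity by factoring through $\CC[[s_1,\dots,s_n]]\twoheadrightarrow B_Y$ and noting that a surjection of regular complete local rings of equal dimension has trivial kernel. The genuine gap is the final inference, made in both branches: an isomorphism of completed local rings does \emph{not} yield the asserted isomorphism of Zariski neighborhoods $\UU\to\VV$; it only makes $\phi$ étale at $\g$. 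The squaring map on $\mathbb{G}_m$ induces isomorphisms on all completed local rings and is nowhere a local isomorphism. To close, you must also show $\phi$ is a monomorphism near $\g$, since an étale monomorphism is an open immersion. That injectivity is available from the standing hypotheses and should have been stated: maximality is an open condition (Remark \ref{secciones}), and $h^1(X,\TT\FF(\g))=0$ together with cohomology and base change (Theorem \ref{cybc}) gives $\pi_{2,*}\TT\FF(\GG)=\GG$ for families near $\g$, so a family of foliations in the relevant neighborhood determines the family of subalgebras inducing it; hence $\phi$ is injective on $S$-points there, and étale $+$ mono finishes the argument.

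On the target-smooth branch, your fat-point caution is exactly right, and in fact shows more than you say: the closed immersion of $Spec(\CC[s]/(s^2))$ into a line is a \emph{monomorphism} with bijective differential onto a smooth target, so no first-order information (not even supplemented by the monomorphism property above) can handle this case --- the remark, read as a purely tangent-level statement, is formal only when the \emph{source} is smooth. Your patch, importing the honest morphism $f$ from the proof of Theorem \ref{isolocal}, is valid under the standing hypothesis $h^1(X,\TT\FF(\g))=0$, but it concedes the game: once $f$ exists, representability already gives that $f\circ\phi_i$ and $\phi_i\circ f$ are the respective inclusions, so $\phi$ is a local isomorphism with no smoothness assumption at all --- you have simply re-run Theorem \ref{isolocal} rather than derived the remark from the first-order discussion. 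So: supplement the source-smooth branch with the monomorphism point and it is a faithful completion of the intended argument (showing, as you note, that $\overline{\inv}$ is then automatically smooth at $\FF(\g)$); the target-smooth branch cannot be run at first order and survives only by falling back on the theorem.
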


\newpage

\end{document}